\documentclass[10.5pt,a4paper]{article}

\usepackage{graphicx,latexsym,euscript,makeidx,color,bm}
\usepackage{amsmath,amsfonts,amssymb,amsthm,thmtools,mathtools,mathrsfs,enumerate}
\usepackage[colorlinks,linkcolor=blue,citecolor=red]{hyperref}



\usepackage{geometry}
\geometry{left=3cm,right=3cm,top=3.0cm,bottom=3.0cm}

\sloppy\allowdisplaybreaks[4]
\setlength{\jot}{1ex}



\def\dbE{\mathbb{E}}    
\def\dbF{\mathbb{F}}  \def\cF{{\cal F}}  
    
\def\dbH{\mathbb{H}}  \def\cH{{\cal H}}  
    
  \def\cJ{{\cal J}}  
  \def\cK{{\cal K}}  
  \def\cL{{\cal L}}  
  \def\cM{{\cal M}}  
  \def\cN{{\cal N}}  
  \def\cO{{\cal O}}  
\def\dbP{\mathbb{P}}    
  \def\cQ{{\cal Q}}  
\def\dbR{\mathbb{R}}  \def\cR{{\cal R}}  
\def\dbS{\mathbb{S}}  \def\cS{{\cal S}}  
    
  \def\cU{{\cal U}}

  \def\cX{{\cal X}}


\def\ss{\smallskip}     \def\lt{\left}        \def\hb{\hbox}
\def\ms{\medskip}       \def\rt{\right}       \def\ae{\hb{a.e.}}
       \def\lan{\langle}     \def\as{\hb{a.s.}}
\def\ds{\displaystyle}  \def\ran{\rangle}     \def\tr{\hb{tr$\,$}}
     \def\llan{\lt\lan\!}  
\def\no{\noindent}      \def\rran{\!\rt\ran}  \def\var{\hb{var$\,$}}
\def\hp{\hphantom}         
\def\nn{\nonumber}         
\def\rf{\eqref}            
\def\cd{\cdot}             
\def\deq{\triangleq}    \def\({\Big (}        \def\ba{\begin{aligned}}
\def\les{\leqslant}     \def\){\Big )}        \def\ea{\end{aligned}}
\def\ges{\geqslant}     \def\[{\Big[}         \def\bel{\begin{equation}\label}
\def\ti{\tilde}         \def\]{\Big]}         \def\ee{\end{equation}}
     \def\q{\quad}         
\def\h{\hat}            \def\qq{\qquad}       
                                              
\def\a{\alpha}  \def\G{\Gamma}      \def\Om{\Omega}  
   \def\D{\Delta}   \def\d{\delta}   \def\F{\Phi}     
   \def\Th{\Theta}    \def\Si{\Sigma}  
\def\f{\varphi}   \def\l{\lambda}        \def\e{\varepsilon}
    \def\i{\infty}      

\newtheoremstyle{thry}
{}      
{}      
{\sl}   
{}      
{\bf}   
{.}     
{.5em}  
{}      
\theoremstyle{thry}

\newtheorem{theorem}{Theorem}[section]
\newtheorem{proposition}[theorem]{Proposition}

\newtheorem{lemma}[theorem]{Lemma}

\theoremstyle{definition}
\newtheorem{definition}[theorem]{Definition}
\newtheorem{example}[theorem]{Example}

\theoremstyle{remark}
\newtheorem{remark}[theorem]{Remark}

\def\punct{}
\newtheoremstyle{dotless}{}{}{\rm}{}{\bf}{\punct}{.5em}{}
\theoremstyle{dotless}

\newenvironment{taggedassumption}[1]
 {\taggedassumptionx}
 {\endtaggedassumptionx}

\makeatletter
   
   \@addtoreset{equation}{section}
   \newcommand{\setword}[2]{%
   \phantomsection
   #1\def\@currentlabel{\unexpanded{#1}}\label{#2}%
   }
\makeatother


\begin{document}

\title{\bf Mean-Field  Linear-Quadratic Stochastic Differential Games}
\author{Jingrui Sun\thanks{Department of Mathematics, Southern University of Science and Technology,
                           Shenzhen 518055, China (Email: {\tt sunjr@sustech.edu.cn}).
                           This author is supported by NSFC Grant 11901280, Guangdong Basic and Applied Basic Research Foundation 2021A1515010031, and SUSTech start-up funds Y01286128 and Y01286228.}~~~
       Hanxiao Wang\thanks{Corresponding author. Department of Mathematics, National University of Singapore,
                           Singapore 119076, Singapore (Email: {\tt mathxw@nus.edu.sg}).
                           This author is supported by Singapore MOE AcRF Grant R-146-000-271-112.}~~~
      Zhen Wu\thanks{School of Mathematics, Shandong University, Jinan 250100,  China (Email: {\tt wuzhen@sdu.edu.cn}).
       This author is supported by NSFC Grants 11831010, 61961160732
        and Shandong Provincial Natural Science Foundation ZR2019ZD42.}
        }

\maketitle

\no{\bf Abstract.}
The paper is concerned with two-person zero-sum mean-field linear-quadratic stochastic differential games
over finite horizons.
By a Hilbert space method, a necessary condition and a sufficient condition are derived for the existence
of an open-loop saddle point.
It is shown that under the sufficient condition, the associated two Riccati equations admit unique strongly
regular solutions, in terms of which the open-loop saddle point can be represented as a linear feedback
of the current state.
When the game only satisfies the necessary condition,
an approximate sequence is constructed by solving a family of Riccati equations and closed-loop systems.
The convergence of the approximate sequence turns out to be equivalent to the open-loop solvability of the game,
and the limit is exactly an open-loop saddle point, provided that the game is open-loop solvable.

\ms

\no{\bf Keywords.}
linear-quadratic differential game, mean-field stochastic differential equation,
two-person, zero-sum, open-loop saddle point, Riccati equation, closed-loop representation,
perturbation approach.

\ms

\no{\bf AMS subject classifications.} 91A15, 93E20, 49N10, 49N70.

\section{Introduction}\label{Sec:Intro}

Let $(\Om,\cF,\dbP)$ be a complete probability space on which a standard one-dimensional
Brownian motion $W=\{W(t);0\les t<\i\}$ is defined.
The augmented natural filtration of $W$ is denoted by $\dbF=\{\cF_t\}_{t\ges0}$.
Consider the following controlled linear mean-field stochastic differential equation
(MF-SDE, for short) on a finite horizon $[0,T]$:
\bel{state}\left\{\begin{aligned}
   dX(s) &=\big\{A(s)X(s)+ \bar A(s)\dbE[X(s)] +B_1(s)u_1(s)+\bar B_1(s)\dbE[u_1(s)] \\
         &~\hp{=} +B_2(s)u_2(s)+\bar B_2(s)\dbE[u_2(s)] \big\}ds+\big\{C(s)X(s)+\bar C(s)\dbE[X(s)]\\
         &~\hp{=} +D_1(s)u_1(s)+\bar D_1(s)\dbE[u_1(s)]+ D_2(s)u_2(s)+\bar D_2(s)\dbE[u_2(s)]\big\}dW(s), \\
    X(0) &= x,
\end{aligned}\right.\ee
where $A,\bar A,\,C,\bar C:[0,T]\to\dbR^{n\times n}$, $B_i,\bar B_i,\, D_i,\bar D_i:[0,T]\to\dbR^{n\times m_i}$
($i=1,2$), called the {\it coefficients} of the {\it state equation} \rf{state}, are given deterministic functions.
The solution $X$ of \rf{state} is called a {\it state process},
and $u_i$ ($i=1,2$), belonging to the space
$$ \cU_i = \bigg\{\f:[0,T]\times\Om\to\dbR^{m_i} \bigm| \f\hb{~is $\dbF$-progressively measurable},
~\dbE\int^T_0|\f(s)|^2ds<\i\bigg\},
$$
is called the {\it control process} of Player $i$.
To measure the performance of the controls $u_1$ and $u_2$, we introduce the following functional:
\begin{align}\label{cost}
J(x;u_1,u_2)
&= \dbE\Big\{\lan GX(T),X(T)\ran +\lan\bar G\dbE[X(T)],\dbE[X(T)]\ran  \nn\\
&~\hp{=} +\int_0^T\llan\begin{pmatrix} Q & S_1^\top & S_2^\top\\
                                     S_1 & R_{11}   & R_{12}  \\
                                     S_2 & R_{21}   & R_{22}  \end{pmatrix}
                       \begin{pmatrix}X \\ u_1 \\ u_2\end{pmatrix},
                       \begin{pmatrix}X \\ u_1 \\ u_2\end{pmatrix}\rran ds \nn\\
&~\hp{=} +\int_0^T\llan\begin{pmatrix}\bar Q & \bar S_1^\top & \bar S_2^\top\\
                                    \bar S_1 & \bar R_{11}   & \bar R_{12}  \\
                                    \bar S_2 & \bar R_{21}   & \bar R_{22}  \end{pmatrix}
                       \begin{pmatrix}\dbE[X] \\ \dbE [u_1] \\ \dbE[u_2]\end{pmatrix},
                       \begin{pmatrix}\dbE[X] \\ \dbE [u_1] \\ \dbE[u_2]\end{pmatrix}\rran ds \Bigg\},
\end{align}
where $G$ and $\bar G$ are $n\times n$ symmetric matrices; $Q,\bar Q:[0,T]\to\dbR^{n\times n}$,
$S_i,\bar S_i:[0,T]\to\dbR^{m_i\times n}$, and $R_{ij},\bar R_{ij}:[0,T]\to\dbR^{m_i\times m_j}$
($i,j=1,2$) are deterministic functions with $Q=Q^\top$, $\bar Q=\bar Q^\top$, $R_{ji}=R^\top_{ij}$
and $\bar R_{ji}=\bar R_{ij}^\top$ ($i,j=1,2$).
In the Lebesgue integral on the right-hand side of \rf{cost},
we have suppressed the argument $s$, and we will do so in the sequel as long as no ambiguity arises.

\ms

The functional $J(x;u_1,u_2)$ represents the cost of Player $1$ and the payoff of Player $2$ for
using $u_1$ and $u_2$ to control the state process that starts from $x$.
Naturally, in this two-person zero-sum mean-field linear-quadratic stochastic differential game
(Problem (MF-SG), for short), Player 1 wishes to minimize \rf{cost} by selecting his/her control
from $\cU_1$, and Player 2 wishes to maximize \rf{cost} by selecting his/her control from $\cU_2$.
The control pair $(u_1^*,u_2^*)$ acceptable to both players is called an {\it open-loop saddle point}
of Problem (MF-SG), which is mathematically defined by the following inequalities:
\bel{define-saddle-point}
J(x;u^*_1,u_2)\les J(x;u^*_1,u^*_2)\les J(x;u_1,u^*_2),\q\forall (u_1,u_2)\in\cU_1\times\cU_2.
\ee
From \rf{define-saddle-point} one sees that if one of the players keeps his/her control $u_i^*$ unchanged,
the other cannot benefit by changing his/her control.
In this sense, an open-loop saddle point (if exists) will be the best choice for both players.

\ms

When $\bar A$, $\bar B_i$, $\bar C$, $\bar D_i$, $\bar G$, $\bar Q$, $\bar S_i$ and $\bar R_{ij}$ ($i,j=1,2$)
all vanish,  Problem (MF-SG) reduces to the classical two-person zero-sum linear-quadratic (LQ, for short)
stochastic differential game (Problem (SG), for short), which has been studied for a long history
and is widely applied in engineering, economy, and biology, etc.
Since the purpose of the paper is not to make a lengthy survey on the literature, we only list here some
closely related works (see, e.g., \cite{Bernhard1979,Zhang2005,Mou-Yong2006,Delfour2007,Delfour-Sbarba2009,Sun-Yong2014,Yu2015})
and refer the reader to the book \cite{Sun-Yong20202} by Sun--Yong for more details and references cited therein.
It is particularly worthy to mention that in a recent paper \cite{Sun2020} by Sun, the {\it strongly regular}
solvability of the Riccati equation associated with Problem (SG) is established under the so-called
{\it uniform convexity-concavity condition}. This result brings new insights into the two-person zero-sum
LQ stochastic differential game and serves as a foundation for our study on Problem (MF-SG).

\ms

Mean-field stochastic optimal control problems, which can be regarded as special cases of
the mean-field stochastic differential game in the sense that one is interested in a single
decision maker, have also attracted a lot of attention; see, for example,
\cite{Ahmed2007,Andersson2011,Buckdahn2011,Pham2018,Brandis2012,Bensoussan2013}.
The mean-field LQ stochastic optimal control problem was initially studied by Yong \cite{Yong2013}
and was later generalized by Huang--Li--Yong \cite{Huang2014}, Yong \cite{Yong2017},
Sun \cite{Sun2017}, Li--Sun--Xiong \cite{Li-Sun-Xiong2019}, and Sun--Wang \cite{Sun-Wang2019}
to various cases.
Let us briefly recall the  motivation for studying mean-field LQ stochastic optimal control problems proposed by Yong \cite{Yong2013}.
In some cases, one hopes that the
optimal state process and/or control process could be not too sensitive with respect to the possible variation of the random events.
To achieve this, one needs to keep the variances var$[X]$ and var$[u]$ small.
Therefore, it is natural to take var$[X]$ and var$[u]$ into account and consider cost functionals of the form
\begin{align}\label{cost111}
J(x;u)
&= \dbE\Big\{\lan GX(T),X(T)\ran +g \var [X(T)] +\int_0^T \big[ \lan Q(s)X(s),X(s)\ran \nn\\
&~\hp{=} + q(s) \var [X(s)]+\lan R(s)u(s),u(s)\ran+r(s)\var[u(s)]  \big] ds \Big\}.
\end{align}
In particular, in the mean-variance model (see \cite{Zhou-Li2000}, for example), the cost functional is simply
$\mu\var[X(T)]-\dbE[X(T)]$.
Note that
$$
\var[X(s)]=\dbE[|X(s)|^2]-(\dbE[X(s)])^2, \q \var[u(s)]=\dbE[|u(s)|^2]-(\dbE[u(s)])^2.
$$
The control problem with functional \rf{cost111} is actually a mean-field LQ control problem,
due to the presence of $(\dbE[X(s)])^2$ and $(\dbE[u(s)])^2$.
Another motivation for studying such type of problems is that the mean-field SDEs, also called McKean--Vlasov SDEs,
 can be used to describe particle systems at the mesoscopic level.
Recently, the mean-field/McKean--Vlasov SDE has been wildly used in mean-field game theory.
In general, it is the mean-square limit of the system of interacting particles
(see \cite{Andersson2011,Buckdahn2011}, for example).
For more details of such type of motivations,
we refer the reader to Huang--Malham\'{e}--Caines \cite{Huang2006}, Lasry--Lions \cite{Lasry2007},
 Bensoussan--Frehse--Yam \cite{Bensoussan2013},
Carmona--Delarue \cite{Carmona1}, and the references cited therein.
Along with the development of mean-field LQ optimal control problems,
the LQ differential games for mean-field SDEs have also attracted extensive research,
among which, we would like to mention Bensoussan--Sung--Yam--Yung \cite{Bensoussan2016},
Graber \cite{Graber2016}, Barreiro-Gomez--Duncan--Tembine \cite{Barreiro2019},
Li--Shi--Yong \cite{Li-Shi-Yong2020}, Moon \cite{Moon2020}, and Tian--Yu--Zhang \cite{Ran-Yu-Zhang2020}.

\ms

For mean-field LQ control problems, two Riccati equations are derived by Yong \cite{Yong2013}
to construct an open-loop optimal control.
The solvability of these two Riccati equations is established in \cite{Yong2013} under
certain positivity conditions and is further shown to be equivalent to the uniform convexity
of the cost functional by Sun \cite{Sun2017}.
However, to our best knowledge, there are few significant results on the general solvability
of the Riccati equations associated with Problem (MF-SG) so far.
One of the main contributions of this paper is to fill up this gap.
Compared with Sun \cite{Sun2020}, the additional difficulty mainly comes from the solvability of
the second Riccati equation associated with Problem (MF-SG). To overcome this difficulty,
besides establishing a technical \autoref{lemma-algebra},
we also show that the solution of the first Riccati equation satisfies a {\it comparison property}
(i.e., inequality \rf{Thm:solvability-RE1-main1} in \autoref{Thm:solvability-RE1}).
This property follows from the following observations:
One control in the saddle point of Problem (SG)
is optimal for a backward stochastic LQ control problem
and the value function of this backward problem is given exactly in terms of the solution to the first Riccati equation.
This observation is interesting in its own right and to our best knowledge, it is completely new in the literature.
\ms

In the literature, forward-backward stochastic differential equations (FBSDEs, for short) are usually used to
characterize the open-loop solvability of Problem (MF-SG) (see, for example, \cite{Sun-Yong2014,Sun-Yong20202}).
This method is suitable for deciding whether a control pair is an open-loop saddle point or not,
but not very effective in constructing open-loop saddle points,
because  the associated Riccati equations might be not solvable and then
the optimality system cannot be decoupled.
This paper provides an alternative characterization (see \autoref{thm:perturbation-approach})
for the open-loop solvability of Problem (MF-SG) by a perturbation approach, which can be regarded
as another important contribution.
It is worthy to point out that the characterization is new even for Problem (SG) (in which there are
no mean-field terms present).

\ms

The idea is to add two terms, $\e\|u_{1}\|^2$ and $-\e\|u_2\|^2$, to the original functional so that
the Problem (MF-SG) with the new functional $J_\e(x;u_1,u_2)\deq J(x;u_1,u_2)+\e\|u_{1}\|^2-\e\|u_{2}\|^2$
admits a unique open-loop point $(u_1^\e,u_2^\e)$ that can be represented explicitly in terms of
the solutions to the associated Riccati equations.
Then using the boundedness/convergence of the family $\{(u_1^\e,u_2^\e)\}_{\e>0}$ to justify the open-loop solvability
of the original game.
The main difficulty here is that the value function $V_\e(x)$ of the perturbed game is not monotone in $\e$,
due to which the technique used in the LQ control problem (see \cite{Sun-Li-Yong2016}) cannot be applied directly.
The significant difference between the perturbation methods of controls and games is illustrated
by presenting an elaborate example (see \autoref{exap-perturbation1}).
To overcome the difficulty, we restudy the perturbation approach by a  Hilbert space method,
which helps us to change the boundedness problem of  $\{(u_1^\e,u_2^\e)\}_{\e>0}$ into an equivalent one:
the norm estimate of some perturbed  operators with special structures
(see \autoref{Prop:operator}).
Furthermore, it is found that the explicit upper bound estimate \rf{bound} in \autoref{Prop:operator}
also plays a crucial role in proving  the strong convergence of $\{(u_1^\e,u_2^\e)\}_{\e>0}$,
because in \autoref{thm:perturbation-approach} we hope to show that $\{(u_1^\e,u_2^\e)\}_{\e>0}$ itself is strongly convergent
when Problem (MF-SG) is open-loop solvable.

\ms

To summarize, we list the main contributions of the paper as follows.

\ms

(1) The open-loop solvability of Problem (MF-SG) is studied by a Hilbert space method.
A necessary and sufficient condition for the existence of an open-loop saddle point
is derived (see \autoref{Thm:suffi-nece-condition}).

\ms

(2) Under the uniform convexity-concavity condition, the strongly regular solvability
of the Riccati equations associated with Problem (MF-SG) is established (see \autoref{Thm:solvability-RE1}
and \autoref{Thm:solvability-RE2}).
Further, in terms of the solutions to the Riccati equations, a closed-loop representation of the unique
open-loop saddle point is obtained (see \autoref{open-loop-sovability}).

\ms

(3) Under a necessary condition for the existence of an open-loop saddle point,
an equivalent characterization of the open-loop solvability is established
by a perturbation approach.
This approach also provides an explicit procedure for finding open-loop saddle
points (see \autoref{thm:perturbation-approach}).

\ms

In other words, under the uniform convexity-concavity condition, we first extend the results obtained in Sun \cite{Sun2020}
to the mean-field system.
As explained before, to prove the solvability of  the associated Riccati equations,
we need to make some new observations and to overcome some new difficulties.
Then under the weaker convexity-concavity condition,
we develop a perturbation approach to characterize the open-loop solvability of Problem (MF-SG).
This approach is first established for the game problem and can be regarded as the most technical part in the paper.

\ms

The rest of the paper is organized as follows.
Section \ref{Sec:Pre} collects some preliminary results.
Section \ref{Sec:Rep-Functional} is devoted to the study of the performance functional
from a Hilbert space point of view.
Section \ref{Sec:open-loop-Riccati} establishes the solvability of the associated Riccati
equations and provides a closed-loop representation of the open-loop saddle point.
Section \ref{sec:perturbation} investigates the open-loop solvability of Problem (MF-SG)
by a perturbation method.
An example is presented in Section \ref{Sec:example} to illustrate the results obtained
in previous sections.

\section{Preliminaries}\label{Sec:Pre}
Throughout this paper,  let $\dbR^{n\times m}$ be the Euclidean space consisting of $n\times m$ real matrices,
endowed with the Frobenius inner product $\lan M,N\ran\deq\tr[M^\top N]$,
where $M^\top$ and $\tr(M)$ stand for the transpose  and  the trace of  $M$, respectively.
The norm of a matrix $M$ induced by the Frobenius inner is denoted by $|M|$ and the identity matrix of size $n$ is denoted by $I_n$.
Let $\dbS^n$  be the subspace of $\dbR^{n\times n}$ consisting of symmetric matrices
and $\dbS^n_+$ be the subset of $\dbS^n$ consisting of positive semidefinite matrices.
For any Euclidean space $\dbH$ (which could be $\dbR^n$, $\dbR^{n\times m}$, $\dbS^n$, etc.),
we introduce the following spaces:
\begin{align*}
C([0,T];\dbH):
   &\hb{~~the space of $\dbH$-valued, continuous functions on $[0,T]$}; \\
L^\i(0,T;\dbH):
   &\hb{~~the space of $\dbH$-valued,  essentially bounded functions  on $[0,T]$};\\
L^2_{\mathcal{F}_T}(\Om;\dbH):
   &\hb{~~the space of $\mathcal{F}_T$-measurable, $\dbH$-valued random variables $\xi$} \\
   &\hb{~~such that $\dbE|\xi|^2<\i$}; \\
L_\dbF^2(0,T;\dbH):
   &\hb{~~the space of $\dbF$-progressively measurable, $\dbH$-valued processes} \\
   &\hb{~~$\f:[0,T]\times\Om\to\dbH$ with $\ds\dbE\int_0^T|\f(s)|^2ds<\i$};\\
L_\dbF^2(\Om;C([0,T];\dbH)):
   &\hb{~~the space of $\dbF$-adapted, continuous, $\dbH$-valued processes} \\
   &\hb{~~$\f:[0,T]\times\Om\to\dbH$ with $\dbE\[\ds\sup_{s\in[0,T]}|\f(s)|^2\]<\i$}.
\end{align*}
We denote the norm of the Banach space $\cX$ by $\|\cd\|_{\cX}$,
which is often simply written as $\|\cd\|$ when no confusion occurs.
For $M,N\in\dbS^n$, we use the notation $M\ges N$ (respectively, $M>N$)
to indicate that $M-N$ is positive semidefinite (respectively, positive definite).
For any $\dbS^n$-valued measurable function $F$ on $[0,T]$, we denote
$$\left\{\begin{aligned}
  F\ges 0\q &\Longleftrightarrow\q F(s)\ges 0,\q \ae~s\in[0,T];\\
 F> 0\q &\Longleftrightarrow\q F(s)> 0,\q \ae~s\in[0,T];\\
 F\gg 0\q &\Longleftrightarrow\q F(s)\ges \d I_n,\q \ae~s\in[0,T],~\hbox{for some } \d>0.
\end{aligned}\right. $$
For self-adjoint linear operators $\cM$ and $\cN$ defined on the Hilbert space $\cH$,
we call $\cM$ a {\it positive operator} if  $\lan\cM x,x\ran\ges 0;\, \forall x\in\cH$ (see \cite[page 317, Definition 2]{Yosida}),
and we use $\cM\ges\cN$ to indicate that $\cM-\cN$ is a positive operator.

\ms
To guarantee that Problem (MF-SG) is well-posed,
we impose the following assumptions for the state equation \rf{state} and the functional \rf{cost}.

\begin{taggedassumption}{(H1)}\label{ass:A1}
The coefficients of state equation \rf{state} satisfy
$$
A,\bar A, C,\bar C \in L^\i(0,T;\dbR^{n\times n});
\q  B_i,\bar B_i, D_i,\bar D_i \in L^\i(0,T;\dbR^{n\times m_i}),~ i=1,2.
$$
\end{taggedassumption}

\begin{taggedassumption}{(H2)}\label{ass:A2}
The weighting matrices in the quadratic functional \rf{cost} satisfy:
$G,\bar G\in\dbS^n$, $Q,\bar Q\in L^\i(0,T;\dbS^{n})$, and for $i,j=1,2$,
$$
S_i, \bar S_i\in L^\i(0,T;\dbR^{m_i\times n}),
\q R_{ij}, \bar R_{ij}\in L^\i(0,T;\dbR^{m_i\times m_j}),\q R_{ij}^\top=R_{ji},\q \bar R_{ij}^\top=\bar R_{ji}.
$$
\end{taggedassumption}

Under \ref{ass:A1}, by \cite[Proposition 2.1]{Yong2013},
state equation \rf{state} admits a unique solution $X\in L^2_{\dbF}(\Om;C([0,T];\dbR^n))$.
If the assumption \ref{ass:A2} also holds,
the random variables on the right-hand side of  \rf{cost} are integrable and Problem (MF-SG) is well-posed.
Now we recall two important notions of LQ game problems.

\begin{definition}\label{definition-saddle-point}
A control pair $(u_1^*,u_2^*)\in\cU_1\times\cU_2$ is called an {\it open-loop saddle point}
of Problem (MF-SG) for the initial state $x\in\dbR^n$ if
\bel{definition-saddle-points}
J(x;u^*_1,u_2)\les J(x;u^*_1,u^*_2)\les J(x;u_1,u^*_2),\q\forall (u_1,u_2)\in\cU_1\times\cU_2.
\ee
Problem (MF-SG) is said to be {\it open-loop solvable} at $x$,
if it has an open-loop saddle point for $x$.
\end{definition}

\begin{definition}\label{definition-value-function}
For any $x\in\dbR^n$,  $V(x)$ is called a {\it value} of Problem (MF-SG) at $x$ if
\bel{definition-VF}
V(x)=\inf_{u_1\in\cU_1}\sup_{u_2\in\cU_2}J(x;u_1,u_2)
=\sup_{u_2\in\cU_2}\inf_{u_1\in\cU_1}J(x;u_1,u_2).
\ee
\end{definition}

Note that the value function $V$ is well-defined at $x\in\dbR^n$
only when the second equality in \rf{definition-VF} holds.
If $(u_1^*,u_2^*)\in\cU_1\times\cU_2$ is an open-loop saddle point for $x$, then
\bel{VF-OP}
V(x)=J(x;u_1^*,u_2^*).
\ee
In the following, let us make some preparations for the subsequent analysis of our main results.
We first present a primary lemma, which seems to be new and is crucial to proving the solvability of the Riccati equations.
The proof is sketched in Appendix for completeness.

\begin{lemma}\label{lemma-algebra}
Let $M\in\dbS^n_+$, $K\in\dbR^{n\times m}$ and $L\in\dbR^{n\times n}$. Then for any $\d>0$,
\bel{lemma-algebra-main}
L^\top ML-L^\top M K(K^\top M K+\d I_m)^{-1}K^\top ML\ges 0.
\ee
\end{lemma}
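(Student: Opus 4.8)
The plan is to reduce the inequality to the scalar/positive-semidefinite comparison
$$
I_m - (K^\top M K+\d I_m)^{-1}K^\top M K \ges 0,
$$
since the left-hand side of \rf{lemma-algebra-main} can be rewritten, after inserting $M = M^{1/2}M^{1/2}$ (which makes sense because $M\in\dbS^n_+$), as
$$
L^\top M^{1/2}\Big[ I_n - M^{1/2}K(K^\top M K+\d I_m)^{-1}K^\top M^{1/2}\Big] M^{1/2}L.
$$
So it suffices to show the bracketed $n\times n$ matrix is positive semidefinite; conjugating by $M^{1/2}L$ preserves this. Writing $N \deq M^{1/2}K \in\dbR^{n\times m}$, the bracket is $I_n - N(N^\top N+\d I_m)^{-1}N^\top$, and the claim becomes $N(N^\top N+\d I_m)^{-1}N^\top \les I_n$.

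For this last step I would use the singular value decomposition of $N$, or equivalently diagonalize $N^\top N$: if $N^\top N$ has eigenvalues $\l_k\ges 0$ with orthonormal eigenvectors, then $N(N^\top N+\d I_m)^{-1}N^\top$ has the same nonzero eigenvalues as $(N^\top N+\d I_m)^{-1}N^\top N$, namely $\l_k/(\l_k+\d)$, each of which lies in $[0,1)$ because $\d>0$ and $\l_k\ges 0$. Hence $N(N^\top N+\d I_m)^{-1}N^\top$ is a symmetric matrix with all eigenvalues in $[0,1)$, so $I_n - N(N^\top N+\d I_m)^{-1}N^\top\ges 0$. Substituting back and conjugating gives \rf{lemma-algebra-main}. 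An alternative, coordinate-free route avoiding the SVD: note the algebraic identity
$$
(N^\top N+\d I_m)^{-1}N^\top = N^\top(NN^\top+\d I_n)^{-1},
$$
which follows from $N^\top(NN^\top+\d I_n) = (N^\top N+\d I_m)N^\top$; then $N(N^\top N+\d I_m)^{-1}N^\top = NN^\top(NN^\top+\d I_n)^{-1} = I_n - \d(NN^\top+\d I_n)^{-1}\les I_n$ since $\d(NN^\top+\d I_n)^{-1}\ges 0$.

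The computation is routine; the only point requiring a little care is the use of the symmetric square root $M^{1/2}$, which is legitimate precisely because of the hypothesis $M\in\dbS^n_+$ (this is where positive semidefiniteness, rather than mere symmetry, is essential — without it the factorization $L^\top M^{1/2}[\,\cdots]M^{1/2}L$ and the conclusion both fail). I expect no genuine obstacle; the main thing to get right is bookkeeping of dimensions ($N\in\dbR^{n\times m}$, so $N^\top N+\d I_m$ is $m\times m$ and invertible, while $NN^\top+\d I_n$ is $n\times n$ and invertible) and the fact that conjugation $X\mapsto Y^\top X Y$ maps $\dbS^n_+$ into itself, which finishes the argument after the bracketed matrix is shown to be positive semidefinite.
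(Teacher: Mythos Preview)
Your proof is correct and takes a genuinely different route from the paper. The paper argues by cases on the relative sizes of $m$ and $n$: it first handles the square invertible case directly, then approximates a general square $K$ and $M$ by invertible/positive-definite perturbations, and finally reduces the rectangular cases $n>m$ and $n<m$ to the square case by padding $K$, $L$, $M$ with zero blocks. Your approach bypasses all of this by factoring $M=M^{1/2}M^{1/2}$, setting $N=M^{1/2}K$, and invoking the resolvent-type identity $N(N^\top N+\d I_m)^{-1}N^\top = I_n - \d(NN^\top+\d I_n)^{-1}$, which immediately gives positive semidefiniteness of the bracketed matrix in every dimension at once. Your argument is shorter, dimension-agnostic, and explains structurally why the inequality holds (the eigenvalues $\l_k/(\l_k+\d)<1$); the paper's approach, while more hands-on, has the minor advantage of not needing the square-root map or the push-through identity, relying instead only on limits and block-matrix bookkeeping.
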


The following result is concerned with bounded linear operators,
by which we shall develop a perturbation approach for the open-loop solvability of Problem (MF-SG) in Section \ref{sec:perturbation}.
Let $\cH_1$ and $\cH_2$ be two real Hilbert spaces. Let
$$ \cM_{ij}:\cH_j\to\cH_i,\q i,j=1,2 $$
be linear bounded operators with $\cM_{ji}=\cM_{ij}^*$,
where $\cM_{ij}^*$ denotes the adjoint operator of $\cM_{ij}$. Then
$$ \cM \deq \begin{pmatrix}\cM_{11} & \cM_{12} \\ \cM_{21} & \cM_{22}\end{pmatrix}$$
is a self-adjoint linear bounded operator on the product Hilbert space $\cH_1\times\cH_2$
equipped with the inner product
$$\llan\begin{pmatrix}x_1 \\ y_1\end{pmatrix},\begin{pmatrix}x_2 \\ y_2\end{pmatrix}\rran
\deq \lan x_1,x_2\ran + \lan y_1,y_2\ran, \q\forall x_1,x_2\in\cH_1,~ y_1,y_2\in\cH_2. $$

\begin{proposition}\label{Prop:operator}
Suppose that $\cM_{11}$ and $-\cM_{22}$ are positive  operators; that is $\cM_{11}\ges 0$ and $-\cM_{22}\ges 0$.
Then for any $\e>0$,
$$ \cM_\e \deq \begin{pmatrix}\cM_{11}+\e I & \cM_{12} \\ \cM_{21} & \cM_{22}-\e I\end{pmatrix}$$
is invertible. Moreover,
\begin{align}\label{bound}
\|\cM_\e^{-1}\cM\| \les 1, \q\forall \e>0.
\end{align}
\end{proposition}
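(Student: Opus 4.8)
The plan is to read everything off from the zero-sum block pattern by means of the bounded self-adjoint involution
$$\cN\deq\begin{pmatrix} I & 0\\ 0 & -I\end{pmatrix}\colon\cH_1\times\cH_2\to\cH_1\times\cH_2,$$
which satisfies $\cN^2=I$, $\lan\cN u,\cN v\ran=\lan u,v\ran$ for all $u,v$, and $\cM_\e=\cM+\e\cN$. First I would record the one algebraic fact that drives the proof: $\cM$ is \emph{$\cN$-nonnegative}, i.e. $\lan\cM v,\cN v\ran\ges0$ for every $v=(v_1,v_2)\in\cH_1\times\cH_2$. Indeed, expanding the pairing gives $\lan\cM_{11}v_1,v_1\ran+\lan\cM_{12}v_2,v_1\ran-\lan\cM_{21}v_1,v_2\ran-\lan\cM_{22}v_2,v_2\ran$; the two cross terms cancel because $\cM_{21}=\cM_{12}^*$ and the inner products are real, and the survivors are $\ges0$ by the hypotheses $\cM_{11}\ges0$, $-\cM_{22}\ges0$, so that
$$\lan\cM v,\cN v\ran=\lan\cM_{11}v_1,v_1\ran-\lan\cM_{22}v_2,v_2\ran\ges0,\qquad\forall v=(v_1,v_2).$$

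For the invertibility of $\cM_\e$, this fact yields a lower bound immediately: $\lan\cM_\e v,\cN v\ran=\lan\cM v,\cN v\ran+\e\lan\cN v,\cN v\ran\ges\e\|v\|^2$, so $\e\|v\|^2\les\|\cM_\e v\|\,\|\cN v\|=\|\cM_\e v\|\,\|v\|$ and hence $\|\cM_\e v\|\ges\e\|v\|$ for all $v$. Thus $\cM_\e$ is injective with closed range; and since $\cM_\e=\cM+\e\cN$ is self-adjoint, $\overline{\range\cM_\e}=(\ker\cM_\e)^\perp=\{0\}^\perp=\cH_1\times\cH_2$. Hence $\cM_\e$ is a bijection with bounded inverse (in fact $\|\cM_\e^{-1}\|\les\e^{-1}$).

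For the estimate \rf{bound}, I would fix $w\in\cH_1\times\cH_2$, set $z\deq\cM_\e^{-1}\cM w$, and rewrite $\cM_\e z=\cM w$ as $\cM z+\e\cN z=\cM w$, i.e. $\e\,\cN z=\cM(w-z)$; applying $\cN$ and using $\cN^2=I$ gives $\e z=\cN\cM(w-z)$, and therefore
$$\|z\|^2=\frac1\e\lan z,\cN\cM(w-z)\ran=\frac1\e\lan\cM(w-z),\cN z\ran.$$
Now I would write $\cN z=\cN w-\cN(w-z)$ and use the $\cN$-nonnegativity of $\cM$ at the vector $w-z$ together with $\cM(w-z)=\e\,\cN z$ to get
$$\lan\cM(w-z),\cN z\ran=\lan\cM(w-z),\cN w\ran-\lan\cM(w-z),\cN(w-z)\ran\les\lan\cM(w-z),\cN w\ran=\e\lan\cN z,\cN w\ran=\e\lan z,w\ran.$$
Combining the last two displays gives $\|z\|^2\les\lan z,w\ran\les\|z\|\,\|w\|$, hence $\|z\|\les\|w\|$; as $w$ is arbitrary, this is exactly $\|\cM_\e^{-1}\cM\|\les1$.

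I expect the only genuine difficulty to be spotting the right test element: pairing the defining relation $\cM_\e z=\cM w$ against $\cN z$ and $\cN w$ (rather than against $z$ and $w$) turns the indefiniteness of $\cM$ into the honest nonnegativity of the form $v\mapsto\lan\cM v,\cN v\ran$ that the zero-sum structure supplies. Everything after that is routine; the two points to keep an eye on are that the spaces are \emph{real} Hilbert spaces --- so that $\lan\cM_{12}v_2,v_1\ran=\lan v_2,\cM_{12}^*v_1\ran=\lan\cM_{21}v_1,v_2\ran$ and the cross terms really do cancel --- and that $\cN$ is an isometric involution, which is what makes both $\|\cN v\|=\|v\|$ and $\lan\cN z,\cN w\ran=\lan z,w\ran$ available.
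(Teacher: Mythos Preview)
Your proof is correct and takes a genuinely different route from the paper's. Both hinge on the same structural fact --- in your language, the $\cN$-nonnegativity $\lan\cM v,\cN v\ran\ges0$ --- but exploit it differently. The paper first proves invertibility by an explicit Schur-complement formula for $\cM_\e^{-1}$, then introduces the auxiliary operator $\h\cM_\e\deq\cN\cM_\e$ and computes the operator identity
$$(\cM_\e^{-1}\cM)^*(\cM_\e^{-1}\cM)=I-\e(\h\cM_\e^{-1})^*\big[\h\cM_\e+\h\cM_\e^*-\e I\big]\h\cM_\e^{-1},$$
observing that $\h\cM_\e+\h\cM_\e^*-\e I=\begin{pmatrix}2\cM_{11}+\e I&0\\0&-2\cM_{22}+\e I\end{pmatrix}\ges0$, which squeezes the left-hand side between $0$ and $I$. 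Your argument avoids both the Schur inverse and the operator-level identity: the coercivity bound $\lan\cM_\e v,\cN v\ran\ges\e\|v\|^2$ gives invertibility directly via self-adjointness, and the norm estimate falls out of a single pairing computation on $z=\cM_\e^{-1}\cM w$ against $\cN z$ and $\cN w$. Your approach is shorter and more transparent about \emph{why} the zero-sum structure produces the contraction; the paper's computation, on the other hand, yields the slightly finer information that $(\cM_\e^{-1}\cM)^*(\cM_\e^{-1}\cM)\les I$ as an operator inequality, and its explicit inverse formula may be useful elsewhere.
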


\begin{proof}
Since $\cM_{11}+\e I\ges\e I$, $\cM_{11,\e} \deq \cM_{11}+\e I$ is invertible with $\|\cM_{11,\e}^{-1}\|\les\e^{-1}$.
Similarly, $\cM_{22,\e} \deq \cM_{22}-\e I$ is invertible with $\|\cM_{22,\e}^{-1}\|\les\e^{-1}$,
and the self-adjoint operator
$$ \F_\e \deq \cM_{22,\e}-\cM_{21}\cM_{11,\e}^{-1}\cM_{12}\equiv\cM_{22,\e}-\cM_{12}^*\cM_{11,\e}^{-1}\cM_{12} $$
is invertible with $\|\F_{\e}^{-1}\|\les\e^{-1}$. Now it is straightforward to verify that $\cM_\e$ is invertible with inverse
$$ \cM_\e^{-1} =
\begin{pmatrix}\cM_{11,\e}^{-1}+(\cM_{11,\e}^{-1}\cM_{12})\F_\e^{-1}(\cM_{11,\e}^{-1}\cM_{12})^* & -(\cM_{11,\e}^{-1}\cM_{12})\F_\e^{-1} \\[1mm]
               -\F_\e^{-1}(\cM_{11,\e}^{-1}\cM_{12})^* & \F_\e^{-1} \end{pmatrix}. $$
To prove \rf{bound}, we write
$$\cM_\e^{-1}\cM
= \begin{pmatrix}I & 0 \\ 0 & I\end{pmatrix} - \e\cM_\e^{-1}\begin{pmatrix*}[r]I & 0 \\ 0 & -I\end{pmatrix*}
= \begin{pmatrix}I & 0 \\ 0 & I\end{pmatrix} - \e\begin{pmatrix}\cM_{11,\e} & \cM_{12} \\ -\cM_{12}^* & -\cM_{22,\e}\end{pmatrix}^{-1}. $$
Denote
\bel{def-B-e}
\h\cM_\e\deq\begin{pmatrix}\cM_{11,\e} & \cM_{12} \\ -\cM_{12}^* & -\cM_{22,\e}\end{pmatrix}.
\ee
Then
\begin{align}
\cM_\e^{-1}\cM=I-\e\h\cM_\e^{-1},
\end{align}
and thus
\begin{align}
(\cM_\e^{-1}\cM)^*\cM_\e^{-1}\cM
&= (I-\e\h\cM_\e^{-1} )^*(I-\e\h\cM_\e^{-1} )\nn \\
&= I-\e (\h\cM_\e^{-1})^*-\e\h\cM_\e^{-1}+\e^2 (\h\cM_\e^{-1})^* \h\cM_\e^{-1}\nn\\
\label{A-A-star}&=I-\e(\h\cM_\e^{-1})^*\big[\h\cM_\e+\h\cM_\e^*-\e I\big]\h\cM_\e^{-1}.
\end{align}
Note that
\begin{align*}
\h\cM_\e+\h\cM_\e^*-\e I
&=\begin{pmatrix}\cM_{11,\e} & \cM_{12} \\ -\cM_{12}^* & -\cM_{22,\e}\end{pmatrix}
+\begin{pmatrix}\cM_{11,\e} & -\cM_{12} \\ \cM_{12}^* & -\cM_{22,\e}\end{pmatrix}
-\e\begin{pmatrix}I & 0 \\ 0 & I\end{pmatrix}\\
&=\begin{pmatrix}2\cM_{11}+\e I & 0 \\ 0 & -2\cM_{22}+\e I\end{pmatrix}.
\end{align*}
Using the fact that $\cM_{11}$ and $-\cM_{22}$ are positive  operators, we have
\begin{align*}
\begin{pmatrix}2\cM_{11}+\e I & 0 \\ 0 & -2\cM_{22}+\e I\end{pmatrix}\ges 0;
\end{align*}
that is
$$
\h\cM_\e+\h\cM_\e^*-\e I\ges 0,
$$
which implies that
$$
\e(\h\cM_\e^{-1})^*\big[\h\cM_\e+\h\cM_\e^*-\e I\big]\h\cM_\e^{-1}\ges 0.
$$
Combining the above with \rf{A-A-star} yields
\begin{align}
0\les(\cM_\e^{-1}\cM)^*\cM_\e^{-1}\cM=I-\e(\h\cM_\e^{-1})^*\big[\h\cM_\e+\h\cM_\e^*-\e I\big]\h\cM_\e^{-1}\les I.
\end{align}
Thus, we get
$$
\|\cM_\e^{-1}\cM\|^2=\|(\cM_\e^{-1}\cM)^*\cM_\e^{-1}\cM\|\les 1.
$$
The proof is complete.
\end{proof}

\section{Representation of the functional}\label{Sec:Rep-Functional}
In this section, we shall study the functional \rf{cost} from a Hilbert space point of view and
represent it as a quadratic functional of the controls $(u_1,u_2)$,
by which  a necessary condition and a sufficient condition  will be derived for the open-loop solvability.
For any $u_i\in \cU_i$ ($i=1,2$), consider the following MF-SDE:
\bel{state-ui}\left\{\begin{aligned}
   dX^{i,0}(s) &=\big\{A(s)X^{i,0}(s)+ \bar A(s)\dbE[X^{i,0}(s)] +B_i(s)u_i(s)+\bar B_i(s)\dbE[u_i(s)] \big\}ds \\
         &\hp{= }~ +\big\{C(s)X^{i,0}(s)+\bar C(s)\dbE[X^{i,0}(s)] + D_i(s)u_i(s)\\
         &\hp{= }~+\bar D_i(s)\dbE[u_i(s)]\big\}dW(s),\q s\in[0,T],\\
     X^{i,0}(0)&= 0.
\end{aligned}\right.\ee
Under \ref{ass:A1}, the above MF-SDE admits a unique solution $X^{i,0}\in L_\dbF^2(\Om;C([0,T];\dbR^n))$
satisfying
\bel{K}
\dbE\[\sup_{s\in[0,T]}|X^{i,0}(s)|^2\]\les K\dbE\int_0^T|u_i(s)|^2ds,
\ee
where the constant $K>0$ is independent of $u_i$.
Thus we can define two bounded linear operators $\cL_i:\cU_i\to L_\dbF^2(\Om;C([0,T];\dbR^n))$
and $\h\cL_i:\cU_i\to L_{\cF_T}^2(\Om;\dbR^n)$ as follows:
\bel{def-cL-cM}
\cL_i u_i=X^{i,0},\q \h\cL_i u_i=X^{i,0}(T), \q \forall u_i\in\cU_i;\q i=1,2.
\ee
Also we can define the linear operators $\cN:\dbR^n\to L_\dbF^2(\Om;C([0,T];\dbR^n))$
and $\h\cN:\dbR^n\to L_{\cF_T}^2(\Om;\dbR^n)$ as follows:
\bel{def-cN-cO}
\cN x=X^{0,x},\q \h\cN x=X^{0,x}(T),\q \forall x\in\dbR^n,
\ee
with $X^{0,x}$ being the unique solution to the following MF-SDE:
\bel{state-0-x}\left\{\begin{aligned}
   dX^{0,x}(s) &=\big\{A(s)X^{0,x}(s)+ \bar A(s)\dbE[X^{0,x}(s)]  \big\}ds \\
         &\hp{= }~ +\big\{C(s)X^{0,x}(s)+\bar C(s)\dbE[X^{0,x}(s)] \big\}dW(s),\q s\in[0,T],\\
     X^{0,x}(0)&= x.
\end{aligned}\right.\ee
For any given $(x,u_1,u_2)\in\dbR^n\times\cU_1\times\cU_2$,
it is easily checked that $X^{0,x}+X^{1,0}+X^{2,0}$ satisfies state equation \rf{state}.
Thus, by the uniqueness of the solution to MF-SDE \rf{state}, we have
\bel{X-Xi}
X=X^{0,x}+X^{1,0}+X^{2,0}=\cN x+\cL_1 u_1+\cL_2 u_2,\q \forall(x,u_1,u_2)\in\dbR^n\times\cU_1\times\cU_2.
\ee
In particular, the terminal value of $X$ can be represented by
\begin{align}
X(T)&=X^{0,x}(T)+X^{1,0}(T)+X^{2,0}(T)\nn\\
\label{X-Xi-T}&=\h\cN x+\h\cL_1 u_1+\h\cL_2 u_2,\q \forall(x,u_1,u_2)\in\dbR^n\times\cU_1\times\cU_2.
\end{align}
Then using \rf{X-Xi}--\rf{X-Xi-T}, by the completion
of squares technique it is straightforward to obtain the  following representation of the functional \rf{cost}:
\begin{align}
 J(x;u_1,u_2)
=\lan\cM u,u\ran+2\lan \cK x,u\ran+\lan\cO x,x\ran,\nn\\
\label{cost-functional-rewrite}~ \forall x\in\dbR^n,~ u=(u_1^\top,u^\top_2)^\top\in\cU_1\times\cU_2,
\end{align}
where
\bel{def-M}
\cM\deq \begin{pmatrix}\cM_{11}& \cM_{12}\\\cM_{21} & \cM_{22}\end{pmatrix},~
\cK\deq \begin{pmatrix}\cK_{1}\\\cK_{2} \end{pmatrix},~
\cO\deq \h\cN^*(G+\dbE^*\bar G\dbE )\h\cN+\cN^*(Q+\dbE^*\bar Q\dbE )\cN,
\ee
with
\begin{align}
\nn&\cM_{ij}\deq \h\cL_j^*(G+\dbE^*\bar G\dbE )\h\cL_i+\cL_j^*(Q+\dbE^*\bar Q\dbE )\cL_i+R_{ji}+\dbE^*\bar R_{ji}\dbE\\
\label{def-Mij}&\qq\q+(S_j+\dbE^*\bar S_j\dbE)\cL_i+\cL_j^*(S_i^\top+\dbE^*\bar S_i^\top\dbE),\q i,j=1,2;\\
%
%
%
\label{def-k}&\cK_{i}\deq \h\cL_i^*(G+\dbE^*\bar G\dbE )\h\cN+\cL_i^*(Q+\dbE^*\bar Q\dbE )\cN+(S_i+\dbE^*\bar S_i\dbE)\cN,\q i=1,2.
\end{align}
By the above expression of $\cM_{ij};i,j=1,2$, we have $\cM_{ji}=\cM_{ij}^*;\,i,j=1,2$,
which implies that $\cM$ is a self-adjoint operator. With the representation \rf{cost-functional-rewrite},
we provide the following characterization for the open-loop saddle points of Problem (MF-MG).

\begin{proposition}\label{Thm:suffi-nece-condition}
Let {\rm\ref{ass:A1}--\ref{ass:A2}} hold. Let $x\in\dbR^n$ be any given initial state
and $u^*=(u_1^{*\top},u_2^{*\top})^\top\in\cU_1\times\cU_2$.
Then $u^*$ is an open-loop saddle point of {\rm Problem (MF-MG)} for $x$ if and only if
\bel{main-suffi-nece-condition}
(-1)^{i+1} \cM_{ii}\ges 0;~i=1,2\q\hbox{and}\q \cM u^*+\cK x=0.
\ee
\end{proposition}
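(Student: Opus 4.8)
The plan is to exploit the quadratic representation \rf{cost-functional-rewrite} and reduce the saddle-point inequalities \rf{definition-saddle-points} to two parametrized scalar convexity/concavity statements plus a stationarity condition. Fix $x$ and $u^*=(u_1^{*\top},u_2^{*\top})^\top$. For the first inequality in \rf{definition-saddle-points}, freeze the first component at $u_1^*$ and let $u_2=u_2^*+v_2$ range over $\cU_2$; for the second, freeze the second component at $u_2^*$ and let $u_1=u_1^*+v_1$ range over $\cU_1$. Using \rf{cost-functional-rewrite} and the self-adjointness of $\cM$, a direct expansion gives, for each $v_i\in\cU_i$,
\begin{align*}
J(x;u_1^*,u_2^*+v_2)-J(x;u_1^*,u_2^*) &= \lan\cM_{22}v_2,v_2\ran + 2\lan \cM_{21}u_1^*+\cM_{22}u_2^*+\cK_2 x,\, v_2\ran,\\
J(x;u_1^*+v_1,u_2^*)-J(x;u_1^*,u_2^*) &= \lan\cM_{11}v_1,v_1\ran + 2\lan \cM_{11}u_1^*+\cM_{12}u_2^*+\cK_1 x,\, v_1\ran.
\end{align*}
So $u^*$ is an open-loop saddle point if and only if the first expression is $\le 0$ for all $v_2$ and the second is $\ge 0$ for all $v_1$.

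Next I would treat each of the two scalar affine-quadratic conditions by the standard argument. Consider the map $v_1\mapsto \lan\cM_{11}v_1,v_1\ran + 2\lan b_1,v_1\ran$ with $b_1\deq\cM_{11}u_1^*+\cM_{12}u_2^*+\cK_1 x$. Replacing $v_1$ by $t v_1$ and letting $t\to0^{\pm}$ forces $\lan b_1,v_1\ran=0$ for all $v_1\in\cU_1$, hence $b_1=0$; and once $b_1=0$, nonnegativity of $\lan\cM_{11}v_1,v_1\ran$ for all $v_1$ is exactly $\cM_{11}\ges0$. Conversely, if $\cM_{11}\ges0$ and $b_1=0$, the expression is manifestly $\ge0$. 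The same reasoning applied to the second component gives: the first inequality holds for all $v_2$ iff $-\cM_{22}\ges0$ and $\cM_{21}u_1^*+\cM_{22}u_2^*+\cK_2 x=0$. Stacking the two stationarity identities $b_1=0$ and $\cM_{21}u_1^*+\cM_{22}u_2^*+\cK_2x=0$ into a single block equation yields precisely $\cM u^*+\cK x=0$, while the two operator inequalities are exactly $(-1)^{i+1}\cM_{ii}\ges0$, $i=1,2$. This establishes \rf{main-suffi-nece-condition}.

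The only genuinely delicate point — which I would flag as the main obstacle — is the passage from ``$\lan\cM_{11}v_1,v_1\ran+2\lan b_1,v_1\ran\ge0$ for all $v_1$'' to ``$b_1=0$ and $\cM_{11}\ges0$'' in the infinite-dimensional Hilbert-space setting. The scaling trick $v_1\rightsquigarrow t v_1$ handles the linear term cleanly and does not need any boundedness assumption, so that half is routine. For the quadratic part one must be slightly careful: after $b_1=0$ we need $\lan\cM_{11}v_1,v_1\ran\ge0$ on all of $\cU_1$, which is the definition of a positive operator quoted from \cite{Yosida}; no spectral argument is required. One should also double-check that the cross terms in the expansion of $J(x;u_1^*+v_1,u_2^*)$ collapse correctly — i.e. that the coefficient of $v_1$ is $\cM_{11}u_1^*+\cM_{12}u_2^*+\cK_1x$ and not something involving $\cM_{12}^*$ — which is where self-adjointness $\cM_{12}^*=\cM_{21}$, already recorded after \rf{def-k}, is used. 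Beyond that, the argument is a clean symmetric bookkeeping of the two one-sided optimality conditions.
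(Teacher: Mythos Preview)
Your proof is correct and follows essentially the same route as the paper: expand $J$ at a perturbation of $u^*$ using the quadratic representation \rf{cost-functional-rewrite}, then use a scalar-scaling argument (the paper writes $u_i^*+\lambda u_i$ with $\lambda\in\dbR$, you write $u_i^*+v_i$ and rescale $v_i\rightsquigarrow tv_i$) to separate the linear stationarity condition from the quadratic sign condition, and finally stack the two componentwise identities into $\cM u^*+\cK x=0$.
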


\begin{proof}
By \autoref{definition-saddle-point}, $u^*=(u_1^{*\top},u_2^{*\top})^\top$ is an open-loop saddle point of {\rm Problem (MF-MG)}
if and only if
\begin{align}
\label{suffi-nece-condition1}
J(x;u_1^*+\l u_1,u_2^*)-J(x;u_1^*,u_2^*)\ges 0,\q \forall u_1\in\cU_1,~\l\in\dbR; \\
\label{suffi-nece-condition2}
J(x;u_1^*,u_2^*+\l u_2)-J(x;u_1^*,u_2^*)\les 0,\q \forall u_2\in\cU_2,~\l\in\dbR.
\end{align}
For any $u_1\in\cU_1$ and $\l\in\dbR$, by \rf{cost-functional-rewrite} we have
\begin{align}
\nn&J(x;u_1^*+\l u_1,u_2^*)-J(x;u_1^*,u_2^*)\\
&\q=\l^2\big\lan\cM_{11}u_1,u_1\big\ran+2\l\big[ \big\lan\cM_{11}u^*_1,u_1\big\ran
+\big\lan\cM_{12}u^*_2,u_1\big\ran+\big\lan\cK_{1}x,u_1\big\ran\big].
\end{align}
Thus \rf{suffi-nece-condition1}  holds if and only if
\bel{suffi-nece-condition3}
\cM_{11}\ges 0\q\hbox{and}\q \cM_{11}u^*_1+\cM_{12}u^*_2+\cK_{1}x=0.
\ee
By the same argument as the above, we can show that \rf{suffi-nece-condition2}  holds if and only if
\bel{suffi-nece-condition4}
\cM_{22}\les 0\q\hbox{and}\q \cM_{22}u^*_2+\cM_{21}u^*_1+\cK_{2}x=0.
\ee
Note that \rf{main-suffi-nece-condition} is equivalent to \rf{suffi-nece-condition3} and \rf{suffi-nece-condition4}.
The proof is thus complete.
\end{proof}

From \autoref{Thm:suffi-nece-condition}, we see that the following {\it convexity-concavity condition},
\bel{convex-condition}
\begin{aligned}
\lan\cM_{11}u_1,u_1\ran &=J(0;u_1,0)\ges 0,\q \forall u_1\in\cU_1;\\
\lan\cM_{22}u_2,u_2\ran &=J(0;0,u_2)\les 0,\q \forall u_2\in\cU_2,
\end{aligned}
\ee
is  necessary for the existence of an open-loop saddle point.
Next we introduce a condition slightly stronger than \rf{convex-condition}:

\begin{taggedassumption}{(H3)}\label{ass:H3}\rm
There exists a constant $\a>0$ such that
\bel{uniform-convex-condition}
\begin{aligned}
\lan\cM_{11}u_1,u_1\ran &=J(0;u_1,0)\ges \a \|u_1\|^2,\q~~ \forall u_1\in\cU_1;\\
\lan\cM_{22}u_2,u_2\ran &=J(0;0,u_2)\les -\a \|u_2\|^2,\q \forall u_2\in\cU_2.
\end{aligned}
\ee
\end{taggedassumption}
If \ref{ass:H3}  holds, for convenience we usually write \rf{uniform-convex-condition} as follows:
\begin{align*}
\lan\cM_{11}u_1,u_1\ran &=J(0;u_1,0)\gg 0,\q \forall u_1\in\cU_1;\\
\lan\cM_{22}u_2,u_2\ran &=J(0;0,u_2)\ll 0,\q \forall u_2\in\cU_2.
\end{align*}
The following result shows that the {\it uniform convexity-concavity condition} \ref{ass:H3}
is  sufficient for the open-loop solvability of Problem (MF-MG).

\begin{proposition}\label{suffi-condition}
Let {\rm \ref{ass:A1}--\ref{ass:H3}} hold, and the operators $\cM$ and $\cK$ be defined by \rf{def-M}.
Then $\cM$ is invertible and for any $x\in\dbR^n$,
{\rm Problem (MF-MG)} admits a unique open-loop saddle point $u^*=(u_1^{*\top},u_2^{*\top})^\top$ given by
\bel{main-suffi-condition}
u^*=-\cM^{-1}\cK x.
\ee
\end{proposition}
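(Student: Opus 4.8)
The plan is to deduce everything from the characterization in \autoref{Thm:suffi-nece-condition} together with the invertibility result \autoref{Prop:operator}. By \autoref{Thm:suffi-nece-condition}, a pair $u^*=(u_1^{*\top},u_2^{*\top})^\top$ is an open-loop saddle point for $x$ if and only if $(-1)^{i+1}\cM_{ii}\ges0$ for $i=1,2$ and $\cM u^*+\cK x=0$. First I would observe that under \ref{ass:H3} the semidefiniteness part holds automatically and with room to spare: by \rf{uniform-convex-condition}, $\cM_{11}\ges\a I\ges0$ and $-\cM_{22}\ges\a I\ges0$. Hence $u^*$ is an open-loop saddle point for $x$ precisely when $\cM u^*+\cK x=0$, and the whole statement reduces to proving that $\cM$ is invertible; then $u^*=-\cM^{-1}\cK x$ is the unique solution of this equation and therefore, by the above, the unique open-loop saddle point.

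The core step is the invertibility of $\cM$, which I would obtain by applying \autoref{Prop:operator} after a shift by $\a$. Recall from \rf{def-Mij} that $\cM_{ij}:\cU_j\to\cU_i$ with $\cM_{ji}=\cM_{ij}^*$, so the abstract framework of \autoref{Prop:operator} applies with $\cH_i=\cU_i$. Set $\cM_{11}^\circ\deq\cM_{11}-\a I$ and $\cM_{22}^\circ\deq\cM_{22}+\a I$, keeping the off-diagonal blocks $\cM_{12},\cM_{21}$ unchanged. By \rf{uniform-convex-condition}, $\cM_{11}^\circ\ges0$ and $-\cM_{22}^\circ=-\cM_{22}-\a I\ges0$, so the block operator with diagonal $(\cM_{11}^\circ,\cM_{22}^\circ)$ satisfies the hypotheses of \autoref{Prop:operator}. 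Its $\e$-perturbation with $\e=\a$ is
$$\begin{pmatrix}\cM_{11}^\circ+\a I & \cM_{12}\\ \cM_{21} & \cM_{22}^\circ-\a I\end{pmatrix}
=\begin{pmatrix}\cM_{11} & \cM_{12}\\ \cM_{21} & \cM_{22}\end{pmatrix}=\cM,$$
so \autoref{Prop:operator} yields that $\cM$ is invertible (and, as a byproduct of its proof with $\e=\a$, one even gets $\|\cM^{-1}\|\les\a^{-1}$ via the bound $\|\F_\a^{-1}\|\les\a^{-1}$ on the Schur complement). Alternatively, this can be shown directly: $\cM_{11}\ges\a I$ is invertible, the Schur complement $\cM_{22}-\cM_{12}^*\cM_{11}^{-1}\cM_{12}\les\cM_{22}\les-\a I$ is invertible, and the standard block-inversion formula exhibits $\cM^{-1}$ explicitly.

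Finally I would assemble the pieces: with $\cM$ invertible, set $u^*\deq-\cM^{-1}\cK x\in\cU_1\times\cU_2$; it is the unique solution of $\cM u+\cK x=0$, and combined with the already-verified conditions $\cM_{11}\ges0$, $\cM_{22}\les0$, \autoref{Thm:suffi-nece-condition} shows that $u^*$ is an open-loop saddle point for $x$, while uniqueness of the saddle point follows from uniqueness of the solution of $\cM u+\cK x=0$. I do not expect any real obstacle here: the only substantive point is the invertibility of $\cM$, and that is essentially \autoref{Prop:operator} read with a shift by $\a$ (or a one-line Schur-complement argument); everything else is bookkeeping with \autoref{Thm:suffi-nece-condition}.
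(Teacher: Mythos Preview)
Your proposal is correct and essentially matches the paper. The paper proves invertibility of $\cM$ directly via the Schur-complement/block-inversion formula (your ``alternatively'' paragraph is exactly the paper's argument), and then invokes \autoref{Thm:suffi-nece-condition} for the rest; your primary route through \autoref{Prop:operator} with the $\a$-shift is a neat repackaging of the same computation, since the proof of \autoref{Prop:operator} itself proceeds by that very Schur-complement inversion.
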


\begin{proof}
By \rf{uniform-convex-condition}, we obtain that the operators $\cM_{11}$, $\cM_{22}$ and
$\F\deq \cM_{22}-\cM_{21}\cM_{11}^{-1}\cM_{12}$ are  invertible.
Then it is straightforward to verify that $\cM$ is invertible with inverse
\bel{M-inverse}
\cM^{-1}=\begin{pmatrix}
\cM_{11}^{-1}+(\cM_{11}^{-1}\cM_{12})\F^{-1}(\cM_{11}^{-1}\cM_{12})^* & -(\cM_{11}^{-1}\cM_{12})\F^{-1} \\[1mm]
-\F^{-1}(\cM_{11}^{-1}\cM_{12})^* & \F^{-1}
\end{pmatrix}.
\ee
The remaining results follow from \autoref{Thm:suffi-nece-condition} directly.
\end{proof}

\section{Open-loop saddle points and Riccati equations}\label{Sec:open-loop-Riccati}
According to \autoref{suffi-condition}, under the uniform convexity-concavity condition \ref{ass:H3},
the open-loop saddle point for the given  $x\in\dbR^n$ can be uniquely determined by \rf{main-suffi-condition}.
However, since $\cM^{-1}\cK$ is an abstract operator and very complicated,
it is usually difficult to find the open-loop saddle point by computing \rf{main-suffi-condition} directly.
Thus in this section, we shall give a more explicit form of  the open-loop saddle point
by introducing two associated Riccati equations. Furthermore,
it will be shown that the unique open-loop saddle point admits a closed-loop representation.

\ms

Recall from \cite{Sun2020} that the Riccati equation associated with Problem (SG)  reads
\bel{Ric1}\left\{\begin{aligned}
  & \dot P+PA+A^\top P+C^\top PC+Q \\
  & \hp{\dot P} -(PB+C^\top PD+S^\top)(R+D^\top PD)^{-1}(B^\top P+D^\top PC+ S)=0, \\
  & P(T)=G,
\end{aligned}\right.\ee
where
\begin{align}
\nn&B=(B_1,B_2),\q  D=(D_1,D_2),
\q S= \begin{pmatrix} S_{1}\\ S_{2}\end{pmatrix},
\q R= \begin{pmatrix} R_{11} & R_{12}\\ R_{21} & R_{22}\end{pmatrix},\\
\nn& R+D^\top PD= \begin{pmatrix} R_{11}+D_1^\top PD_1 & R_{12}+D_1^\top PD_2 \\
R_{21}+D_2^\top PD_1 & R_{22}+D_2^\top PD_2\end{pmatrix},\\
\label{def-B}&B^\top P+D^\top PC+ S= \begin{pmatrix} B_1^\top P+D_1^\top PC+ S_1\\
B_2^\top P+D_2^\top PC+ S_2\end{pmatrix}.
\end{align}

\begin{definition}\label{definition-SP-solution}\rm
An absolutely continuous function $P:[0,T]\to \dbS^n$ is called a  {\it strongly regular solution} of Riccati equation \rf{Ric1} if
\begin{enumerate}[(i)]
\item For $i=1,2$, $(-1)^{i+1}[R_{ii}+D_i^\top PD_i]\gg 0$, and

\item  $P$ satisfies \rf{Ric1}  almost everywhere on $[0,T]$.

\end{enumerate}
\end{definition}

To establish the solvability of Riccati equation \rf{Ric1}, we introduce the following two optimal control problems:
For $i=1,2$, consider the state equation
\bel{state-i-control}\left\{\begin{aligned}
   dX(s) &=\big\{A(s)X(s)+ \bar A(s)\dbE[X(s)] +B_i(s)u_i(s)+\bar B_i(s)\dbE[u_i(s)]\big\}ds \\
         &\hp{= ~} +\big\{C(s)X(s)+\bar C(s)\dbE[X(s)] + D_i(s)u_i(s)+\bar D_i(s)\dbE[u_i(s)]\big\}dW(s),\\
     X(0)&= x,
\end{aligned}\right.\ee
and the cost functional
\begin{align}\label{cost-i-control}
\nn &J_i(x;u_i)= (-1)^{i+1}\dbE\big\{\lan GX(T),X(T)\ran +\lan\bar G\dbE[X(T)],\dbE[X(T)]\ran  \\
&\qq +\int_0^T\Bigg[\llan\begin{pmatrix}Q & S_i^\top \\
                                              S_i & R_{ii}\end{pmatrix}
                                 \begin{pmatrix}X \\ u_i \end{pmatrix},
                                 \begin{pmatrix}X \\ u_i \end{pmatrix}\rran +\llan\begin{pmatrix}\bar Q & \bar S_i^\top \\
                                              \bar S_i &\bar R_{ii} \end{pmatrix}
                                 \begin{pmatrix}\dbE[X] \\ \dbE [u_i] \end{pmatrix},
                                 \begin{pmatrix}\dbE[X] \\ \dbE [u_i] \end{pmatrix}\rran \Bigg]ds \Bigg\}.
\end{align}
If the mean-field terms in the above vanish, then \rf{state-i-control} and \rf{cost-i-control} reduce to
\bel{state-i-control-no}\left\{\begin{aligned}
   dX(s) &=\big\{A(s)X(s) +B_i(s)u_i(s)\big\}ds +\big\{C(s)X(s) + D_i(s)u_i(s)\big\}dW(s),\\
     X(0)&= x,
\end{aligned}\right.\ee
and
\begin{align}\label{cost-i-control-no}
\nn \cJ_i(x;u_i)
&= (-1)^{i+1}\dbE\Bigg\{\lan GX(T),X(T)\ran
 +\int_0^T\llan\begin{pmatrix}Q & S_i^\top \\
                                              S_i & R_{ii} \end{pmatrix}
                                 \begin{pmatrix}X \\ u_i \end{pmatrix},
                                 \begin{pmatrix}X \\ u_i\end{pmatrix}\rran ds  \Bigg\}.
\end{align}
The Riccati equations associated with the above LQ control problems read
\bel{Ric-i}\left\{\begin{aligned}
  & \dot P_i+P_iA+A^\top P_i+C^\top P_iC+Q -(P_iB_i+C^\top P_iD_i+S_i^\top)\\
  & \hp{\dot P}\q \times(R_{ii}+D_i^\top P_iD_i)^{-1}(B_i^\top P_i+D_i^\top P_{i}C+ S_i)=0,\q i=1,2, \\
  & P_i(T)=G,\q i=1,2.
\end{aligned}\right.\ee

\begin{theorem}\label{Thm:solvability-RE1}
Let {\rm \ref{ass:A1}--\ref{ass:H3}} hold.
Then Riccati equation \rf{Ric1} admits a unique strongly regular solution $P\in C([0,T];\dbS^n)$.
Moreover, the strongly regular solution $P$ satisfies
\bel{Thm:solvability-RE1-main1}
P_1(t)\les P(t)\les P_2(t),\q\forall t\in[0,T],
\ee
and
\bel{Thm:solvability-RE1-main2}
(-1)^{i+1}[R_{ii}+\bar R_{ii}+(D_i+\bar D_i)^\top P(D_i+\bar D_i)]\gg 0;\q i=1,2,
\ee
where $P_i\in C([0,T];\dbS^n)$ ($i=1,2$) is the unique solution of \rf{Ric-i}.
\end{theorem}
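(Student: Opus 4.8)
The strategy is to reduce Riccati equation \rf{Ric1} to a scalar LQ-type framework where Sun's result \cite{Sun2020} applies directly, and then to extract the comparison estimate \rf{Thm:solvability-RE1-main1} from a probabilistic interpretation of $P$. First I would observe that Riccati equation \rf{Ric1} is exactly the Riccati equation for Problem (SG) with coefficients $(A,C,B,D,Q,S,R,G)$ and \emph{no} mean-field terms. Under \ref{ass:H3}, the restriction of the convexity-concavity condition to controls with zero mean (or simply the fact that \ref{ass:H3} implies the corresponding uniform convexity-concavity condition for Problem (SG)) gives the hypotheses of the main theorem of \cite{Sun2020}; hence \rf{Ric1} admits a unique strongly regular solution $P\in C([0,T];\dbS^n)$ with $(-1)^{i+1}[R_{ii}+D_i^\top PD_i]\gg0$. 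This handles existence, uniqueness, and regularity; the real content is \rf{Thm:solvability-RE1-main1} and \rf{Thm:solvability-RE1-main2}.

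For the comparison \rf{Thm:solvability-RE1-main1}, the key new observation advertised in the introduction is that one component of the saddle point for Problem (SG) solves a \emph{backward} stochastic LQ control problem whose value function is expressed via $P$. Concretely: fix $i=1$. The solution $P_1$ of \rf{Ric-i} is the value-function matrix of the minimization problem $\cJ_1(x;u_1)$ over $u_1\in\cU_1$ (with $\bar{(\cdot)}$-terms absent), so $\cJ_1(x;u_1)\ges\lan P_1(0)x,x\ran$ for all $u_1$; and it is standard that under \ref{ass:H3} this problem is uniformly convex, so $P_1$ exists and is its exact value. Likewise $P_2$ is the value-function matrix of the maximization problem $\cJ_2(x;u_2)$, giving $\cJ_2(x;u_2)\les\lan P_2(0)x,x\ran$, i.e. $-\cJ_2 \ges -\lan P_2(0)x,x\ran$. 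Now I would compare these two \emph{scalar} control problems with the \emph{game} whose value-function matrix is $P$: for the game, $\lan P(0)x,x\ran$ is the saddle value, and plugging the saddle control $u_2^*$ of the game into the upper inequality $J(x;u_1^*,u_2)\les J(x;u_1^*,u_2^*)$ together with a lower bound by $\cJ_1$ (optimizing only over $u_1$ and using that the cross terms have the right sign) should sandwich $\lan P_1(0)x,x\ran\les\lan P(0)x,x\ran$. The same argument run at every initial time $t\in[0,T]$ (using the time-$t$ versions of all three Riccati equations on $[t,T]$) upgrades this to $P_1(t)\les P(t)$ for all $t$; symmetrically $P(t)\les P_2(t)$.

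The cleanest route to \rf{Thm:solvability-RE1-main2} is then to run the \emph{same} existence argument on the averaged / mean-field level. Taking expectations in \rf{state-i-control} produces an ODE for $\bar x(s)=\dbE[X(s)]$ with coefficients $(A+\bar A,\,B_i+\bar B_i)$ and no diffusion, and the cost $J_i$ splits into a ``fluctuation'' part governed by $(A,C,B_i,D_i,Q,S_i,R_{ii})$ and a ``mean'' part governed by $(A+\bar A, C+\bar C, B_i+\bar B_i, D_i+\bar D_i, Q+\bar Q, S_i+\bar S_i, R_{ii}+\bar R_{ii})$ — this is exactly the decomposition underlying Yong's two-Riccati-equation approach \cite{Yong2013}. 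Under \ref{ass:H3} the mean-part problem is again uniformly convex/concave, so the relevant ``second'' Riccati equation (with data $R_{ii}+\bar R_{ii}+(D_i+\bar D_i)^\top P(D_i+\bar D_i)$ in the control-weight slot, $P$ being the solution just constructed) is strongly regularly solvable, which forces $(-1)^{i+1}[R_{ii}+\bar R_{ii}+(D_i+\bar D_i)^\top P(D_i+\bar D_i)]\gg0$. Alternatively, and perhaps more in the spirit of the operator approach of Section \ref{Sec:Rep-Functional}, I would test the uniform convexity $\lan\cM_{11}u_1,u_1\ran\ges\a\|u_1\|^2$ against \emph{deterministic} controls $u_1\equiv v\in\dbR^{m_1}$: for such $u_1$ the state is deterministic, $X=\dbE[X]$, the diffusion coefficient becomes $(C+\bar C)X+(D_1+\bar D_1)v$ and the cost reduces to the ``mean'' LQ functional, whose uniform convexity is equivalent (via Yong's representation using $P$) to $R_{11}+\bar R_{11}+(D_1+\bar D_1)^\top P(D_1+\bar D_1)\gg0$; the $i=2$ case is symmetric with concavity.

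\textbf{Main obstacle.} Existence/uniqueness of $P$ is essentially a citation. The genuinely delicate point is establishing the comparison $P_1\les P\les P_2$ at \emph{every} $t$: one must be careful that the three problems (two scalar LQ problems and the game) are set up on the same interval $[t,T]$ with the same terminal data $G$, that the cross-weight terms $S_i$ entering $\cJ_i$ really do allow sandwiching the game value between $\lan P_1 x,x\ran$ and $\lan P_2 x,x\ran$ — this needs the sign structure of the $3\times3$ cost matrix, not just its diagonal blocks — and that plugging a saddle-point control of the game into the scalar problem is legitimate (the control must lie in $\cU_i$, which it does). The subtlety is purely that $P$ is the value of a \emph{minimax}, while $P_1,P_2$ are values of a pure min and a pure max respectively, so the inequality requires choosing the ``inner'' player's response in the game optimally and then bounding the ``outer'' player's cost by the corresponding one-player problem; getting the directions of both inequalities consistently is where care is needed. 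Once \rf{Thm:solvability-RE1-main1} is in hand, \rf{Thm:solvability-RE1-main2} follows by the mean-part uniform convexity argument sketched above, which is routine given \autoref{lemma-algebra} and Yong's decomposition.
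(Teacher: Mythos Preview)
Your overall architecture is right --- reduce to Problem (SG), invoke \cite{Sun2020} for $P$, establish the comparison \rf{Thm:solvability-RE1-main1} by a minimax-vs-min argument, then deduce \rf{Thm:solvability-RE1-main2} --- but two steps need repair.

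\textbf{The existence step has a real gap.} Assumption \ref{ass:H3} is a statement about the \emph{mean-field} functional $J$, not about the non-mean-field functional $\cJ$ needed to invoke \cite{Sun2020}. Your suggestion to ``restrict to controls with zero mean'' gives $\cJ(0;u_1,0)=J(0;u_1,0)\ges\a\|u_1\|^2$ only on the subspace $\{\dbE[u_1]=0\}$, which is not enough: \cite{Sun2020} requires uniform convexity on all of $\cU_1$. The paper closes this gap by a detour through the scalar problems. From $J_i(0;u_i)\gg0$ (which \emph{is} \ref{ass:H3}) and \cite[Theorems 4.2, 4.4]{Sun2017}, one first obtains the solutions $P_i$ of \rf{Ric-i} together with the strong regularity $(-1)^{i+1}[R_{ii}+D_i^\top P_iD_i]\gg0$; then \cite[Theorem 5.2]{Sun2017} converts this back into uniform convexity of the \emph{non-mean-field} $\cJ_i$, i.e.\ \rf{cJ-convex}. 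Only now does \cite{Sun2020} apply. You should not bypass this: the implication $J\gg0\Rightarrow\cJ\gg0$ genuinely uses the Riccati characterization of uniform convexity.

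\textbf{The comparison step is essentially what the paper does.} Your sandwiching idea is correct, and in fact simpler than you make it sound: since $\cJ_1(x;u_1)=\cJ(x;u_1,0)$ (the cross terms with $u_2$ just vanish; there is no ``right sign'' issue), one has directly
\[
\lan P_1(0)x,x\ran=\inf_{u_1}\cJ(x;u_1,0)\les\sup_{u_2}\inf_{u_1}\cJ(x;u_1,u_2)=\lan P(0)x,x\ran,
\]
and this localizes to every $t\in[0,T]$. The paper dresses this up as a backward LQ problem (the remark following the proof), but the content is the same.

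\textbf{The final step is much easier than you propose.} You do not need \autoref{lemma-algebra} or Yong's decomposition for \rf{Thm:solvability-RE1-main2}. The detour through $P_i$ in the existence step already delivers, as a byproduct of \cite[Theorem 4.4]{Sun2017}, the inequalities
\[
(-1)^{i+1}\big[R_{ii}+\bar R_{ii}+(D_i+\bar D_i)^\top P_i(D_i+\bar D_i)\big]\gg0,\qquad i=1,2.
\]
Since $M\mapsto(D_i+\bar D_i)^\top M(D_i+\bar D_i)$ is monotone and you now know $P_1\les P\les P_2$, \rf{Thm:solvability-RE1-main2} follows in one line. \autoref{lemma-algebra} is reserved for the \emph{second} Riccati equation in \autoref{Thm:solvability-RE2}, not here.
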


\begin{proof}
Note that the assumption \ref{ass:H3} implies that for any $(u_1,u_2)\in\cU_1\times\cU_2$,
$$
J_1(0;u_1)=J(0;u_1,0)\gg 0 \q \hbox{and}\q J_2(0;u_2)=-J(0;0,u_2)\gg 0.
$$
Thus for $i=1,2$, we obtain from \cite[Theorems 4.2 and 4.4]{Sun2017} that Riccati equation \rf{Ric-i}
admits a unique solution $P_i\in C([0,T];\dbS^n)$ satisfying
\bel{Thm:solvability-RE1-11}
(-1)^{i+1}[R_{ii}+D_i^\top P_iD_i]\gg 0,
\ee
and
\bel{Thm:solvability-RE1-1}
(-1)^{i+1}[R_{ii}+\bar R_{ii}+(D_i+\bar D_i)^\top P_i(D_i+\bar D_i)]\gg 0.
\ee
According to \cite[Theorem 5.2]{Sun2017},  \rf{Thm:solvability-RE1-11} implies that
the mapping $u_i\mapsto\cJ_i(0;u_i)$ is uniformly convex.
Thus,
\begin{align}
\nn&\cJ(0;u_1,0)=\cJ_1(0;u_1)\gg 0,\q~~ \forall u_1\in\cU_1;\\
\label{cJ-convex}& \cJ(0;0,u_2)=-\cJ_2(0;u_2)\ll 0,\q \forall u_2\in\cU_2,
\end{align}
where  $\cJ$ denotes the functional of Problem (SG);
that is $\cJ(x;u_1,u_2)\deq J(x;u_1,u_2)$
with $\bar A$, $\bar B_i$, $\bar C$, $\bar D_i$, $\bar G$, $\bar Q$, $\bar S_i$ and $\bar R_{ij}$ ($i,j=1,2$)
all vanishing.
Then from \cite[Theorem 4.3]{Sun2020}, we obtain that Riccati equation \rf{Ric1} has a strongly regular solution
$P\in C([0,T];\dbS^n)$.
%
%

\ms
To prove the uniqueness, we suppose that $\bar P, \ti P\in C([0,T];\dbS^n)$
are two strongly regular solutions of \rf{Ric1}.
Then $\bar P$ and $\ti P$ satisfy:
$$
(-1)^{i+1}[R_{ii}+D_i^\top\ti PD_i]\gg 0\q \hbox{and}\q (-1)^{i+1}[R_{ii}+D_i^\top \bar PD_i]\gg 0,\q i=1,2.
$$
Similar to \rf{M-inverse},  $R+D^\top\bar PD$ and $R+D^\top\ti PD$ are invertible
with their inverses being bounded. Denote $\D P=\bar P-\ti P$.
Then $\D P$ satisfies the following linear ordinary differential equation:
\bel{Delta-Ric1}\left\{\begin{aligned}
  & \D\dot P+\D PA+A^\top\D P+C^\top \D PC -(\D PB+C^\top\D PD)(R+D^\top\bar PD)^{-1} \\
  & \hp{\dot P}\times(B^\top \bar P+D^\top\bar PC+ S)-(\ti PB^\top +C^\top\ti PD+ S^\top)(R+D^\top\bar PD)^{-1} D^\top\D PD\\
  & \hp{\dot P}\times(R+D^\top\ti PD)^{-1}(B^\top \bar P+D^\top\bar PC+ S)-(\ti PB^\top +C^\top\ti PD+ S^\top)\\
  &\hp{\dot P} \times(R+D^\top\ti PD)^{-1}(B^\top \D P+D^\top\D PC)=0,\\
  & \D P(T)=0.
\end{aligned}\right.\ee
Note that $\bar P$, $\ti P$, $(R+D^\top\bar PD)^{-1}$ and $(R+D^\top\ti PD)^{-1}$ are bounded.
Then by a standard argument using the Gr\"{o}nwall's inequality, we get $\D P\equiv 0$,
which yields the uniqueness of the strongly regular solution to \rf{Ric1}.

\ms
Next let us prove the unique strongly regular solution $P\in C([0,T];\dbS^n)$ of \rf{Ric1} satisfies \rf{Thm:solvability-RE1-main1}.
To this end, for any fixed $u_2\in\cU_2$, we introduce  the following  LQ control problem:  Consider the
state equation
\bel{state-u-2}\left\{\begin{aligned}
   dX(s) &=\big\{A(s)X(s) +B_1(s)u_1(s)+ B_2(s)u_2(s) \big\}ds\\
         &~\hp{=} +\big\{C(s)X(s)+D_1(s)u_1(s)+ D_2(s)u_2(s)\big\}dW(s), \\
    X(0) &= x,
\end{aligned}\right.\ee
and the cost functional
\begin{align}\label{cost-u-2}
\cJ_{u_2}(x;u_1)
                       %
&\deq \cJ(x;u_1,u_2)= \dbE\bigg\{\lan GX(T),X(T)\ran
 +\int_0^T\[\lan QX,X\ran+2\lan S_1X,u_1\ran \nn\\
&\qq\q+\lan R_{11}u_1,u_1\ran +2\lan X, S_2^\top u_2\ran+2\lan u_1,R_{12} u_2\ran+\lan R_{22} u_2, u_2\ran\] ds \bigg\}.
\end{align}
Note that \rf{cJ-convex} implies the mapping $u_1\mapsto \cJ_{u_2}(x;u_1)=\cJ(x;u_1,u_2)$ is uniformly convex,
then by \cite[Theorem 4.3]{Sun-Li-Yong2016} the unique optimal control $\bar u_1(\cd)\equiv\bar u_1(\cd;u_2)$ of the above LQ control problem admits the following closed-loop representation:
\bel{closed-loop-repre-X}
\bar u_1(s)=\Th(s) \bar X(s)+v(s)\equiv\Th(s) \bar X(s;u_2)+v(s;u_2),  \q s\in[0,T],
\ee
where %
      $$\begin{aligned}
        \Th &= -\big(R_{11}+D_1^\top P_1D_1\big)^{-1}\big(B_1^\top P_1+D_1^\top P_1C+S_1\big),\\
          v &= -\big(R_{11}+D_1^\top P_1D_1\big)^{-1}\big(B_1^\top Y+D_1^\top Z+D_1^\top P_1 D_2u_2+R_{12}u_2\big);
      \end{aligned}$$
 $P_1$ is uniquely determined by Riccati equation \rf{Ric-i};
$(Y(\cd),Z(\cd))\equiv(Y(\cd;u_2),Z(\cd;u_2))$ solves the backward stochastic differential  equation:
\bel{eta-beta}\left\{\begin{aligned}
         dY(s) &=-\big[(A+B_1\Th)^\top Y(s) +(C+D_1\Th)^\top Z(s) +(C+D_1\Th)^\top P_1D_2u_2\\
                  &\hp{=-\big[} +\Th^{\top} R_{12}u_2 +P_1B_2u_2 +S_2^\top u_2 \big]ds +Z(s) dW(s), \q s\in[0,T],\\
          Y(T) &=0;
\end{aligned}\right.\ee
and $\bar X(\cd)\equiv\bar X(\cd;u_2)$ is the solution of the closed-loop system:
\bel{state-u2-star}\left\{\begin{aligned}
   d\bar X(s) &=\big\{A(s)\bar X(s) +B_1(s)[\Th(s)\bar X(s)+v(s)]+ B_2(s)u_2(s) \big\}ds\\
         &~\hp{=} +\big\{C(s)\bar X(s)+D_1(s)[\Th(s)\bar X(s)+v(s)]+ D_2(s)u_2(s)\big\}dW(s), \\
    \bar X(0) &= x.
\end{aligned}\right.\ee
Moreover,
\begin{align}\label{cost-u2-star}
&\cJ_{u_2}(x;\bar u_1)=\cJ_{u_2}(x;\Th \bar X+v)=\cJ(x;\Th \bar X+v,u_2)\nn\\
&= \dbE\bigg\{\lan P_1(0)x,x\ran+2\lan Y(0),x\ran
 +\int_0^T\[\lan P_1D_2u_2,D_2u_2\ran+2\lan Y,B_2u_2\ran  \nn\\
&\qq +2\lan Z,D_2u_2\ran+\big\lan(R_{11}+D_1^\top P_1D_1)^{-1}(B_1^\top Y+D_1^\top Z+D_1^\top P_1D_2u_2+R_{12}u_2),\nn\\
&\qq\qq (B_1^\top Y+D_1^\top Z+D_1^\top P_1D_2u_2+R_{12}u_2)\big\ran+\lan R_{22} u_2, u_2\ran\] ds \bigg\}.
\end{align}
 Since the condition \rf{cJ-convex} holds,  by \cite[Theorem 4.4]{Sun2020}
Problem (SG) admits a unique open-loop saddle point $(u_1^*,u_2^*)$; that is
$$
\cJ(x;u^*_1,u_2^*)=\sup_{u_2\in\cU_2}\inf_{u_1\in\cU_1}\cJ(x;u_1,u_2)=\inf_{u_1\in\cU_1}\sup_{u_2\in\cU_2}\cJ(x;u_1,u_2)=\lan P(0)x,x\ran,
$$
 which implies that
\bel{cJ-line}
\inf_{u_1\in\cU_1}\cJ_{u^*_2}(x;u_1)=\cJ_{u^*_2}(x;u^*_1) \q\hbox{and}\q \cJ(x;u^*_1,u_2^*)=\sup_{u_2\in\cU_2}\cJ(x;u^*_1,u_2).
\ee
Recall that the mapping $u_1\mapsto \cJ(x;u_1,u_2^*)$ is uniformly convex,
which implies that $u_1^*$ is the unique  control satisfying the first equality in the above,
thus we have
\bel{closed1}
\cJ(x;u^*_1,u_2^*)=\cJ_{u_2^*}(x;u^*_1)=\cJ_{u^*_2}(x;\Th \bar X^*+v^*)=\cJ(x;\Th \bar X^*+v^*,u^*_2),
\ee
with $\bar X^*(\cd)=\bar X(\cd;u_2^*)$ and $v^*(\cd)=v(\cd;u_2^*)$.
On the other hand, by the second equality in \rf{cJ-line}
and the closed-loop representation \rf{closed-loop-repre-X} of the optimal control $\bar u_1$,
we get
$$
\cJ(x;u^*_1,u_2^*)\ges \cJ(x;u_1^*,u_2)=\cJ_{u_2}(x;u_1^*)\ges \cJ_{u_2}(x;\bar u_1)= \cJ(x;\Th \bar X+v,u_2),\q\forall u_2\in\cU_2,
$$
with $\bar X(\cd)=\bar X(\cd;u_2)$ and $v(\cd)=v(\cd;u_2)$. Combining the above with \rf{closed1} yields that
\bel{u2-star1}
\cJ(x;\Th \bar X^*+v^*,u^*_2)\ges \cJ(x;\Th \bar X+v,u_2)\equiv \cJ(x;\Th \bar X(\cd;u_2)+v(\cd;u_2),u_2),\q\forall u_2\in\cU_2.
\ee
Taking $u_2=0$ in \rf{u2-star1} and then making use of \rf{cost-u2-star} and \rf{eta-beta}, we have
$$
\lan P(0)x,x\ran=\cJ(x;\Th \bar X^*+v^*,u^*_2)\ges \cJ(x;\Th \bar X+v,0)=\lan P_1(0)x,x\ran.
$$
Using the above arguments to the Problem (SG) with the initial pair replaced by $(t,x)\in[0,T]\times\dbR^n$, we obtain
$$
\lan P(t)x,x\ran\ges\lan P_1(t)x,x\ran,\q\forall(t,x)\in[0,T]\times\dbR^n.
$$
In a similar manner, we can also show that
$$
\lan P(t)x,x\ran\les\lan P_2(t)x,x\ran,\q\forall(t,x)\in[0,T]\times\dbR^n.
$$
Thus, the unique strongly regular solution $P$ of Riccati equation \rf{Ric1} satisfies \rf{Thm:solvability-RE1-main1}.
Finally, combining \rf{Thm:solvability-RE1-main1} with \rf{Thm:solvability-RE1-1},
we get \rf{Thm:solvability-RE1-main2} immediately. The proof is thus complete.
\end{proof}


\begin{remark}
If we define the state processes $(Y,Z)$ by  \rf{eta-beta}
and the cost functional $\h J(x;u_2)\deq-\cJ(x;\Th \bar X+v,u_2)$ by \rf{cost-u2-star},
then the corresponding control problem is a backward LQ problem.
From \rf{u2-star1}, we see that under \ref{ass:H3},
if $(u_1^*,u_2^*)$ is the unique open-loop saddle point of Problem (SG),
then $u_2^*$ is optimal for the above designed backward problem.
For more results of backward LQ  control problems,
we refer the reader to \cite{Lim-Zhou2001,Li-Sun-Xiong2019,Sun-Wang20191} and the references cited therein.
\end{remark}

With the strongly regular solution $P$ of \rf{Ric1},
we now introduce the following deterministic two-person zero-sum LQ differential game problem
(Problem (DG), for short): Consider the state equation
\bel{state-deterministic}\left\{\begin{aligned}
   \dot{y}(s) &=[A(s)+ \bar A(s)]y(s) +[B_1(s)+\bar B_1(s)]v_1(s)+[B_2(s)+\bar B_2(s)]v_2(s),\\
     y(0)&= x,
\end{aligned}\right.\ee
and the functional
\begin{align}\label{cost-derterministic}
 &\bar J(x;v_1,v_2) = \big\lan (G+\bar G)y(T),y(T)\big\ran
 +\int_0^T\llan\begin{pmatrix} \Upsilon& \Gamma_1^\top & \Gamma_2^\top\\
                                              \G_1 & \bar\Sigma_{11} & \bar\Sigma_{12}\\
                                              \G_2 & \bar\Sigma_{21} & \bar\Sigma_{22}\end{pmatrix}
                                 \begin{pmatrix}y \\ v_1 \\ v_2\end{pmatrix},
                                 \begin{pmatrix}y \\ v_1 \\ v_2\end{pmatrix}\rran ds,
\end{align}
where
\bel{def-G}\left\{\begin{aligned}
&\Upsilon=Q+\bar Q+(C+\bar C)^\top P(C+\bar C),\\
&\G_i=(D_i+\bar D_i)^\top P(C+\bar C)+(S_i+\bar S_i),\q i=1,2,\\
&\bar\Sigma_{ij}=R_{ij}+\bar R_{ij}+(D_i+\bar D_i)^\top P(D_j+\bar D_j),\q i,j=1,2.
\end{aligned}\right.
\ee
Since the strongly regular solution $P$ also satisfies \rf{Thm:solvability-RE1-main2},
the matrices $\Si$ and $\bar\Si$, defined by
\bel{def-Si-barSi}
 \Sigma\equiv R+D^\top P D,\q\bar\Sigma\equiv R+\bar R+(D+\bar D)^\top P(D+\bar D),
\ee
are invertible.
The Riccati equation associated with Problem (DG) is
\bel{Ric2}\left\{\begin{aligned}
& \dot{\Pi}+\Pi(A+\bar A)+(A+\bar A)^\top\Pi+Q+\bar Q+(C+\bar C)^\top P(C+\bar C)-\big[\Pi(B+\bar B)\\
& \hp{\dot{\Pi}}\qq +(C+\bar C)^\top P(D+\bar D)+(S+\bar S)^\top\big]\big[R+\bar R+(D+\bar D)^\top P(D+\bar D)\big]^{-1}\\
& \hp{\dot \Pi}\q \times\big[(B+\bar B)^\top\Pi+(D+\bar D)^\top P(C+\bar C)+(S+\bar S)\big]=0, \\
& \Pi(T)=G+\bar G,
\end{aligned}\right.\ee
where $\bar B$, $\bar D$ and $\bar R$ are defined in a similar way to \rf{def-B}.
The following result shows that under \ref{ass:H3}, Riccati equation \rf{Ric2} is also solvable.

\begin{theorem}\label{Thm:solvability-RE2}
Let {\rm \ref{ass:A1}--\ref{ass:H3}} hold. Then Riccati equation \rf{Ric2} is uniquely solvable.
\end{theorem}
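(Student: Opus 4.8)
The plan is to read \rf{Ric2} as the Riccati equation of the \emph{deterministic} two-person zero-sum LQ game (Problem (DG)) with state equation \rf{state-deterministic} and functional \rf{cost-derterministic}--\rf{def-G}, and to reduce its unique solvability to one fact: Problem (DG) is \emph{uniformly convex-concave}, i.e. $\bar J(0;v_1,0)\ges\a'\|v_1\|^2$ and $\bar J(0;0,v_2)\les-\a'\|v_2\|^2$ for some $\a'>0$. Granting this, Problem (DG) is a degenerate case of Problem (SG) (no diffusion, no mean-field terms, with coefficients $A+\bar A$, $B_i+\bar B_i$ and weights $\Upsilon,\G_i,\bar\Sigma_{ij}$), and \rf{Ric2} is precisely the corresponding Riccati equation \rf{Ric1}; its control weight $\bar\Sigma$ already satisfies $\bar\Sigma_{11}\gg0$, $\bar\Sigma_{22}\ll0$ by \rf{Thm:solvability-RE1-main2}, so \cite[Theorem 4.3]{Sun2020} yields a strongly regular solution $\Pi\in C([0,T];\dbS^n)$ of \rf{Ric2}. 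Uniqueness follows exactly as in the uniqueness part of \autoref{Thm:solvability-RE1}: the difference of two solutions solves a linear ODE with zero terminal value and bounded coefficients (here $\bar\Sigma$ and $\bar\Sigma^{-1}$ are fixed, so this step is even simpler), hence vanishes by Gr\"{o}nwall's inequality.

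Thus the whole matter is the convexity-concavity of Problem (DG), and this is where the comparison property \rf{Thm:solvability-RE1-main1} and \autoref{lemma-algebra} come in. I would first treat deterministic controls. Fix a deterministic $v_1$, let $y$ solve \rf{state-deterministic} with $v_2=0$, and in \rf{state} take $u_2=0$ and $u_1=\Th_1(X-\dbE[X])+v_1$, where $\Th_1=-(R_{11}+D_1^\top P_1D_1)^{-1}(B_1^\top P_1+D_1^\top P_1C+S_1)$ is Player $1$'s first-Riccati feedback gain, which is bounded since $R_{11}+D_1^\top P_1D_1\gg0$ (from \autoref{Thm:solvability-RE1}). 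Then $\dbE[X]=y$, and $X-\dbE[X]$ runs the closed-loop system forced by $g_1=(C+\bar C)y+(D_1+\bar D_1)v_1$; since $\Th_1$ matches the closed-loop form of \rf{Ric-i} with $i=1$, a completion of squares against $P_1$ collapses the fluctuation part of $J(0;u_1,0)$, all cross terms vanishing because $\dbE[X-\dbE[X]]=0$. The upshot is the identity $J(0;u_1,0)=\bar J_1(0;v_1)$, where $\bar J_1$ is the functional obtained from \rf{cost-derterministic}--\rf{def-G} by deleting Player 2 and replacing $P$ by $P_1$. Since $J(0;u_1,0)\ges\a\|u_1\|^2\ges\a\|v_1\|^2$ by \ref{ass:H3} and Jensen's inequality ($\dbE[u_1]=v_1$), and since $\bar J(0;v_1,0)=\bar J_1(0;v_1)+\int_0^T\lan(P-P_1)g_1,g_1\ran\,ds\ges\bar J_1(0;v_1)$ because $P\ges P_1$, we get $\bar J(0;v_1,0)\ges\a\|v_1\|^2$. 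The estimate $\bar J(0;0,v_2)\les-\a\|v_2\|^2$ is symmetric, using $P\les P_2$ and Player 2's first-Riccati gain.

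The step I expect to be the main obstacle is upgrading these deterministic estimates to the full class $\cU_1\times\cU_2$, which is the convexity-concavity actually required by \cite[Theorem 4.3]{Sun2020}. Decomposing an arbitrary $w_1\in\cU_1$ into its mean $\dbE[w_1]$ (handled above) and its mean-zero fluctuation, one is left to bound below the fluctuation's contribution to $\bar J$; after a further completion of squares this contribution carries a residual of exactly the form $L^\top ML-L^\top MK(K^\top MK+\d I)^{-1}K^\top ML$ with $M$ a positive-semidefinite multiple of $P$, and \autoref{lemma-algebra} is precisely the statement that this residual is nonnegative. Doing this bookkeeping carefully---keeping track of which cross terms drop out by a mean-zero argument and invoking \autoref{lemma-algebra} at the right moment---is the technical heart of the proof; once it is done, existence is \cite[Theorem 4.3]{Sun2020} applied to Problem (DG) and uniqueness is Gr\"{o}nwall, which completes the argument.
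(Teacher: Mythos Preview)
Your overall strategy coincides with the paper's: show that Problem (DG) satisfies the uniform convexity-concavity condition and then invoke \cite[Theorem 4.3]{Sun2020}, with uniqueness by the Gr\"{o}nwall argument from \autoref{Thm:solvability-RE1}. Your route to the key estimate $\bar J(0;v_1,0)\gg0$ is in fact \emph{cleaner} than the paper's in two respects. First, where the paper obtains $\bar J_1(0;v_1)\gg0$ by citing \cite[Theorem 4.4]{Sun2017}, you reprove it directly via the closed-loop construction $u_1=\Th_1(X-\dbE[X])+v_1$; your identity $J(0;u_1,0)=\bar J_1(0;v_1)$ is correct (the cross terms with $g_1$ in the completion of squares vanish because $\dbE[X-\dbE[X]]=0$), and $\|u_1\|^2\ges\|v_1\|^2$ follows since $\dbE[u_1]=v_1$. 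Second, your observation
\[
\bar J(0;v_1,0)-\bar J_1(0;v_1)=\int_0^T\big\lan(P-P_1)g_1,g_1\big\ran\,ds\ges0,\qquad g_1=(C+\bar C)y+(D_1+\bar D_1)v_1,
\]
is simply a perfect square in the positive-semidefinite matrix $P-P_1$; the paper instead adds $\d\|v_1\|^2$, completes the square against $\cR=(D_1+\bar D_1)^\top(P-P_1)(D_1+\bar D_1)+\d I$, and invokes \autoref{lemma-algebra} to check $\cQ-\cS^\top\cR^{-1}\cS\ges0$ --- a detour your direct calculation makes unnecessary.

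The one place you go astray is the anticipated ``main obstacle''. There is none. Problem (DG) is a \emph{deterministic} LQ game: the state equation \rf{state-deterministic} has no diffusion and the functional \rf{cost-derterministic} carries no expectation. Viewed as a degenerate instance of Problem (SG), any stochastic control $w_1\in\cU_1$ gives a pathwise state $y(\cdot,\om)$ and $\bar J(0;w_1,0)=\dbE\big[\bar J^{\mathrm{det}}(0;w_1(\cdot,\om),0)\big]$; hence the deterministic bound $\bar J^{\mathrm{det}}(0;v_1,0)\ges\a\|v_1\|^2$ you already established immediately yields $\bar J(0;w_1,0)\ges\a\|w_1\|^2$ by Fubini. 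No mean/fluctuation decomposition of $w_1$, no further completion of squares, and no appeal to \autoref{lemma-algebra} are needed. With that clarification your argument is complete --- and, compared with the paper, it dispenses with \autoref{lemma-algebra} altogether in this proof.
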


\begin{proof}
By \cite[Theorem 4.3]{Sun2020}, to prove the solvability of Riccati equation \rf{Ric2}, it suffices to show that
\bel{v1-v2-con}
\bar J(0;v_1,0)\gg0 \q\hbox{and}\q  \bar J(0;0,v_2)\ll0.
\ee
Denote
\begin{align}\label{cost-derterministic-i}
 &\bar J_1(x;v_1)= \lan (G+\bar G)y(T),y(T)\ran +\int_0^T\llan\begin{pmatrix} \bar\Upsilon_1& \bar\Gamma_1^\top \\
                                              \bar\G_1 & \bar\Sigma_{11} & \end{pmatrix}
                                 \begin{pmatrix}y \\ v_1\end{pmatrix},
                                 \begin{pmatrix}y \\ v_1\end{pmatrix}\rran ds,
\end{align}
where $y$ is the unique solution of \rf{state-deterministic} with $v_2\equiv0$, and
\bel{def-G-i}\left\{\begin{aligned}
&\bar\Upsilon_1=Q+\bar Q+(C+\bar C)^\top P_1(C+\bar C),\\
&\bar\G_1=(D_1+\bar D_1)^\top P_1(C+\bar C)+(S_1+\bar S_1),\\
&\bar\Sigma_{11}=R_{11}+\bar R_{11}+(D_1+\bar D_1)^\top P_1(D_1+\bar D_1),
\end{aligned}\right.
\ee
with $P_1$ being the unique solution of \rf{Ric-i} for $i=1$.
Recall the definition \rf{cost-i-control} of $J_1$  and note that $J_1(0;u_1)=J(0;u_1,0)\gg 0$.
By \cite[Theorem 4.4]{Sun2017}, we have
\bel{bar-J-convex}
\bar J_1(0;v_1)\gg 0.
\ee
On the other hand, for any $\d>0$, by \rf{cost-derterministic}--\rf{cost-derterministic-i} we obtain
\begin{align}
\nn&\q\bar J(0;v_1,0)-\bar J_1(0;v_1)+\d\|v_1\|^2\\
\nn&\q=\int_0^T\Big\{\big\lan(C+\bar C)^\top (P-P_1)(C+\bar C)y,y\big\ran+2\big\lan(D_1+\bar D_1)^\top(P- P_1)(C+\bar C)y,v_1\big\ran\\
\nn&\qq\qq +\big\lan[(D_1+\bar D_1)^\top(P- P_1)(D_1+\bar D_1)+\d I_{m_1}]v_1,v_1\big\ran\Big\}ds\\
\label{def-Q-R-S}&\q\equiv\int_0^T\big[\lan\cQ y,y\ran+2\lan\cS y,x\ran+\lan\cR v_1,v_1\ran\big]ds.
\end{align}
Note that $P\ges P_1$ (recalling \rf{Thm:solvability-RE1-main1}), thus
$$
\cR\equiv(D_1+\bar D_1)^\top(P- P_1)(D_1+\bar D_1)+\d I_{m_1}\ges \d I_{m_1}\gg 0.
$$
Moreover, by \autoref{lemma-algebra} we have
\begin{align}
\nn&\cQ-\cS^\top\cR^{-1}\cS=(C+\bar C)^\top (P-P_1)(C+\bar C)-(C+\bar C)^\top(P- P_1)(D_1+\bar D_1)\\
&\qq\times[(D_1+\bar D_1)^\top(P- P_1)(D_1+\bar D_1)+\d I_{m_1}]^{-1}(D_1+\bar D_1)^\top(P- P_1)(C+\bar C)\ges0.\nn
\end{align}
Thus, the weighting matrices $\cQ$, $\cS$ and $\cR$ satisfy the so-called {\it standard condition}
in the literature (see \cite[Chapter 6]{Yong-Zhou1999}, for example) of  LQ optimal control problems.
Then
\begin{align}
\nn&\bar J(0;v_1,0)-\bar J_1(0;v_1)+\d\|v_1\|^2\\
\nn&\q=\int_0^T\[\big\lan(\cQ-\cS^\top\cR^{-1}\cS) y,y\big\ran+\big\lan\cR(v_1+\cR^{-1}\cS y),(v_1+\cR^{-1}\cS y)\big\ran\]ds\\
\label{bar-J-J1+d}&\q\ges 0.
\end{align}
Since $\d>0$ is arbitrary, we get
\bel{bar-J-J1}
\bar J(0;v_1,0)-\bar J_1(0;v_1)\ges 0,
\ee
which, together with \rf{bar-J-convex}, implies that
\bel{bar-J-v1}
\bar J(0;v_1,0)\gg 0.
\ee
Similarly, we can also prove that
\bel{bar-J-v2}
\bar J(0;0,v_1)\ll 0.
\ee
Combining \rf{bar-J-v1} with \rf{bar-J-v2}, we get \rf{v1-v2-con},
which completes the proof of the existence.
Then by the same arguments as  in the proof of \autoref{Thm:solvability-RE1},
we obtain the unique solvability of Riccati equation \rf{Ric2}.
\end{proof}

\begin{remark}
It is noteworthy that the  comparison property \rf{Thm:solvability-RE1-main1} of  $P$ and $P_i;i=1,2$
serves as a crucial bridge to prove the solvability of Riccati equation \rf{Ric2} in \autoref{Thm:solvability-RE2}.
The technical \autoref{lemma-algebra} is used to show the weighting matrices $\cQ$, $\cS$ and $\cR$ defined by
\rf{def-Q-R-S} exactly satisfy the so-called standard condition.
\end{remark}

With the strongly regular solvability of  Riccati equations \rf{Ric1}--\rf{Ric2} having been established,
we present the closed-loop representation for the open-loop saddle point of Problem (MF-SG).

\begin{theorem}\label{open-loop-sovability}
Let {\rm\ref{ass:A1}--\ref{ass:H3}} hold.
Let $P\in C([0,T];\dbS^n)$ be the strongly regular solution to Riccati equation \rf{Ric1}
satisfying \rf{Thm:solvability-RE1-main1}--\rf{Thm:solvability-RE1-main2}
and $\Pi\in C([0,T];\dbS^n)$ be the solution to Riccati equation \rf{Ric2}.
Then with the notations
\begin{align}
\label{def-Si}&\Sigma= R+D^\top P D,\q\bar\Sigma\equiv R+\bar R+(D+\bar D)^\top P(D+\bar D),\\
\label{def-TH}&\Th=-\Sigma^{-1}\big(B^\top P+D^\top PC+S\big),\\
\label{def-bar-Th}
&\bar\Th=-\bar\Sigma^{-1}[(B+\bar B)^\top\Pi+(D+\bar D)^\top P(C+\bar C)+S+\bar S],
\end{align}
the unique open-loop saddle point $u^*=(u_1^{*\top},u_2^{*\top})^\top$ for the initial state $x$ has the following closed-loop representation:
\bel{open-closed-re}
 u^* = \Th\big\{X^*-\dbE[X^*]\big\}+\bar\Th\dbE[X^*],
\ee
where $X^*$ is the solution to the closed-loop system:
\bel{closed-syst}\left\{\begin{aligned}
dX^*(s) &=\big\{(A+B\Th)(X^*-\dbE[X^*])+[(A+\bar A)+(B+\bar B)\bar\Th]\dbE[X^*]\big\}ds \\
  &\hp{=\ }+\big\{(C+D\Th)(X^*-\dbE[X^*])+[(C+\bar C)+(D+\bar D)\bar\Th]\dbE[X^*]\big\}dW(s),\\
   X^*(0) &=x.
\end{aligned}\right.\ee
Moreover,  the value function of {\rm Problem (MF-SG)} is given by $V(x)=\lan\Pi(0)x,x\ran$.
\end{theorem}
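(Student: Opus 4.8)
The plan is to reduce the theorem to one completion‑of‑squares identity for the functional and then invoke the operator characterization already established.

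\emph{Step 1: a representation.} I would first prove that for every $x\in\dbR^n$ and $(u_1,u_2)\in\cU_1\times\cU_2$,
$$J(x;u_1,u_2)=\lan\Pi(0)x,x\ran+\dbE\int_0^T\lan\Si\Phi,\Phi\ran ds+\int_0^T\lan\bar\Si\bar\Phi,\bar\Phi\ran ds,$$
where $X$ is the state of \rf{state} for $(x,u_1,u_2)$, $u=(u_1^\top,u_2^\top)^\top$, $\Phi=(u-\dbE[u])-\Th(X-\dbE[X])$ and $\bar\Phi=\dbE[u]-\bar\Th\,\dbE[X]$. To this end I would split $X=(X-\dbE[X])+\dbE[X]$: since $X(0)=x$ is deterministic, the centered process $X-\dbE[X]$ starts from $0$ and has zero mean, $\dbE[X]$ starts from $x$, and the functional \rf{cost} separates into an $L^2$-part in $(X-\dbE[X],\,u-\dbE[u])$ with weights $(Q,S,R,G)$ and a mean part in $(\dbE[X],\dbE[u])$ with weights $(Q+\bar Q,S+\bar S,R+\bar R,G+\bar G)$. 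Applying It\^o's formula to $\lan P(X-\dbE[X]),X-\dbE[X]\ran$ and inserting \rf{Ric1} — this mirrors the argument of \cite{Sun2020} for Problem (SG), applied now to the dynamics of $X-\dbE[X]$ — converts the $L^2$-part into $\dbE\int_0^T\lan\Si\Phi,\Phi\ran ds$ with $\Si=R+D^\top PD$ and $\Th$ as in \rf{def-TH}, except that the part of this computation involving the deterministic term $(C+\bar C)\dbE[X]+(D+\bar D)\dbE[u]$ from the diffusion of $X-\dbE[X]$ — whose cross terms with the genuinely stochastic part vanish in expectation — is passed to the mean part; the mean part then becomes exactly $\bar J(x;\dbE[u_1],\dbE[u_2])$ of Problem (DG), and applying ordinary calculus to $\lan\Pi\,\dbE[X],\dbE[X]\ran$ with \rf{Ric2} yields $\lan\Pi(0)x,x\ran+\int_0^T\lan\bar\Si\bar\Phi,\bar\Phi\ran ds$ with $\bar\Si,\bar\Th$ as in \rf{def-Si}--\rf{def-bar-Th}. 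Here \rf{Thm:solvability-RE1-main2} together with the strong regularity of $P$ guarantees $\Si$ and $\bar\Si$ are invertible, so $\Th,\bar\Th$ are well defined and bounded.

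\emph{Step 2: the saddle point and the value.} One checks directly that $X^*$ solving \rf{closed-syst} is the state of \rf{state} driven by $u^*$ of \rf{open-closed-re}, and that then $u^*-\dbE[u^*]=\Th(X^*-\dbE[X^*])$ and $\dbE[u^*]=\bar\Th\,\dbE[X^*]$, so that $\Phi=\bar\Phi=0$; Step 1 then gives $J(x;u_1^*,u_2^*)=\lan\Pi(0)x,x\ran$. Perturbing $u^*$ to $u^*+w$ with $w=(w_1^\top,w_2^\top)^\top\in\cU_1\times\cU_2$, the corresponding state is $X^*+\wt X$, where $\wt X$ solves \rf{state} with zero initial state and control $w$; subtracting and using Step 1 (with $\Phi=\bar\Phi=0$ at $u^*$) gives
$$J(x;u_1^*+w_1,u_2^*+w_2)-J(x;u_1^*,u_2^*)=\dbE\int_0^T\lan\Si\Phi^w,\Phi^w\ran ds+\int_0^T\lan\bar\Si\bar\Phi^w,\bar\Phi^w\ran ds,$$
with $\Phi^w=(w-\dbE[w])-\Th(\wt X-\dbE[\wt X])$ and $\bar\Phi^w=\dbE[w]-\bar\Th\,\dbE[\wt X]$, which is purely quadratic in $w$. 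Comparing with $J(x;u)=\lan\cM u,u\ran+2\lan\cK x,u\ran+\lan\cO x,x\ran$, the absence of a term linear in $w$ forces $\cM u^*+\cK x=0$. Since $(-1)^{i+1}\cM_{ii}\ges0$ is immediate from \ref{ass:H3}, \autoref{Thm:suffi-nece-condition} shows $u^*$ is an open-loop saddle point, \autoref{suffi-condition} that it is the unique one, and \rf{VF-OP} then gives $V(x)=J(x;u_1^*,u_2^*)=\lan\Pi(0)x,x\ran$.

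\emph{Main obstacle.} The real work is Step 1: the It\^o / completion-of-squares computation is lengthy and must be arranged so that every term quadratic or linear in the state cancels. The two delicate points are (i) handling the deterministic inhomogeneity $(C+\bar C)\dbE[X]+(D+\bar D)\dbE[u]$ in the diffusion of $X-\dbE[X]$ — seeing that it decouples cleanly and that its quadratic contribution is precisely what upgrades the weights of \rf{Ric1} to the weights $\Upsilon,\G_i,\bar\Si_{ij}$ of \rf{def-G} appearing in \rf{Ric2}; and (ii) checking that, after substituting the two Riccati equations, the leftover matrices reassemble exactly into $\lan\Si\Phi,\Phi\ran$ and $\lan\bar\Si\bar\Phi,\bar\Phi\ran$ with the $\Th,\bar\Th$ of \rf{def-TH}--\rf{def-bar-Th}. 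Once the representation is in hand nothing new is needed; in particular the two saddle inequalities require no separate argument, being subsumed in $(-1)^{i+1}\cM_{ii}\ges0$ together with $\cM u^*+\cK x=0$.
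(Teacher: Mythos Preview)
Your proposal is correct and takes a genuinely different route from the paper. The paper does \emph{not} establish the completion-of-squares identity of your Step~1. Instead it works on the FBSDE side: it \emph{defines}
\[
Y^*=P(X^*-\dbE[X^*])+\Pi\,\dbE[X^*],\qquad
Z^*=P\big\{C(X^*-\dbE[X^*])+(C+\bar C)\dbE[X^*]+D(u^*-\dbE[u^*])+(D+\bar D)\dbE[u^*]\big\},
\]
applies It\^o's formula to $Y^*$ and checks, using \rf{Ric1}--\rf{Ric2} and the definitions of $\Th,\bar\Th$, that $(X^*,Y^*,Z^*)$ solves the mean-field FBSDE \rf{FBSDE} and that the stationarity condition \rf{opti-condition} holds. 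It then invokes \autoref{lemma-optimality} (the FBSDE characterization of saddle points) rather than \autoref{Thm:suffi-nece-condition}, and obtains $V(x)=\lan\Pi(0)x,x\ran$ by a separate integration-by-parts computation using \rf{opti-condition}.

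The two arguments are closely related --- both ultimately amount to substituting the two Riccati equations into a derivative of $\lan P\hat X,\hat X\ran$ and $\lan\Pi\bar X,\bar X\ran$ --- but organize the bookkeeping differently. Your approach front-loads the work into a single identity valid for \emph{all} $u$, from which both the stationarity $\cM u^*+\cK x=0$ and the value $V(x)=\lan\Pi(0)x,x\ran$ drop out immediately; this bypasses \autoref{lemma-optimality} and avoids a second integration by parts. The paper's approach stays at the level of the specific candidate $u^*$, verifying the optimality system directly; this is lighter per step (no need to track the general $\Phi,\bar\Phi$) and exhibits explicitly the decoupled adjoint pair $(Y^*,Z^*)$, at the cost of a separate argument for the value. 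Your handling of the indefinite $\Si,\bar\Si$ --- not attempting the saddle inequalities from the quadratic form but instead reading off $\cM u^*+\cK x=0$ from the absence of a linear term and falling back on \autoref{Thm:suffi-nece-condition} --- is the right move and is what makes the completion-of-squares route go through cleanly in the game setting.
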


\begin{remark}
By \autoref{open-loop-sovability}, we give an explicit representation for the unique open-loop saddle point of Problem (MF-SG).
Indeed, the LQ problems occupied the center stage for research in  control theory not only for its elegant solutions
but also for its ability to approximate more general nonlinear problems,
as pointed out by Wang--Zariphopoulou--Zhou in their recent work \cite{Wang-Z2020}
of the stochastic control approach in reinforcement learning.
\end{remark}

In order to prove \autoref{open-loop-sovability}, we need the following lemma, whose proof is standard
and is similar to that of \cite[Theorem 4.1]{Sun-Yong2014}.

\begin{lemma}\label{lemma-optimality}
A pair $(\bar u_1,\bar u_2)\in\cU_1\times\cU_2$ is an open-loop saddle point of {\rm Problem (MF-SG)} if and only if
\rf{convex-condition} holds and $\bar u=(\bar u_1^{\top},\bar u_2^{\top})^\top$ satisfies
\begin{align}
\nn&B(s)^\top\bar Y(s)+ \bar B(s)^\top\dbE[\bar Y(s)]+D(s)^\top\bar Z(s)+\bar D(s)^\top\dbE[\bar Z(s)]+S(s)\bar X(s)\\
\label{opti-condition}
&+\bar S(s)\dbE[\bar X(s)]+R(s)\bar u(s)+\bar R(s)\dbE[\bar u(s)]=0,\q s\in[0,T],\q \as,
\end{align}
where $(\bar X,\bar Y,\bar Z)$ is the unique solution to the following
mean-field forward-backward SDE (MF-FBSDE, for short):
\bel{FBSDE}\left\{\begin{aligned}
d\bar X(s) &=\big\{A\bar X+ \bar A\dbE[\bar X] +B\bar u+\bar B\dbE[\bar u]\big\}ds \\
         &\hp{= }~+\big\{C\bar X+ \bar C\dbE[\bar X] +D\bar u+\bar D\dbE[\bar u]\big\}dW(s),\q s\in[0,T],\\
d\bar Y(s) &=-\big\{A^\top\bar Y+ \bar A^\top\dbE[\bar Y] +C^\top\bar Z+ \bar C^\top\dbE[\bar Z] +Q\bar X+\bar Q\dbE[\bar X]\\
&\qq+S^\top\bar u+\bar S^\top\dbE[\bar u]\big\}ds+\bar ZdW(s),\q s\in[0,T],\\
\bar X(0)&= x,\q \bar Y(T)=G\bar X(T)+\bar G\dbE[\bar X(T)].
\end{aligned}\right.\ee
\end{lemma}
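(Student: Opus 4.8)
The plan is to derive the lemma from \autoref{Thm:suffi-nece-condition} by matching the operator stationarity condition $\cM\bar u+\cK x=0$ with the forward–backward condition \rf{opti-condition}. First I would record that, by the representation \rf{cost-functional-rewrite} and the expansion already carried out in the proof of \autoref{Thm:suffi-nece-condition}, for every $v=(v_1^\top,v_2^\top)^\top\in\cU_1\times\cU_2$ and $\l\in\dbR$,
$$
J(x;\bar u+\l v)-J(x;\bar u)=\l^2\lan\cM v,v\ran+2\l\lan\cM\bar u+\cK x,v\ran,
$$
while $\lan\cM_{11}v_1,v_1\ran=J(0;v_1,0)$ and $\lan\cM_{22}v_2,v_2\ran=J(0;0,v_2)$. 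Hence inequality \rf{suffi-nece-condition1} is equivalent to ``$J(0;v_1,0)\ges0$ for all $v_1\in\cU_1$'' together with ``$\lan\cM\bar u+\cK x,(v_1^\top,0^\top)^\top\ran=0$ for all $v_1\in\cU_1$'', and likewise \rf{suffi-nece-condition2} splits into ``$J(0;0,v_2)\les0$ for all $v_2\in\cU_2$'' together with ``$\lan\cM\bar u+\cK x,(0^\top,v_2^\top)^\top\ran=0$ for all $v_2\in\cU_2$''. The first halves are precisely \rf{convex-condition}, so it only remains to prove that $\cM\bar u+\cK x=0$ is equivalent to \rf{opti-condition}, where $\bar X$ is the state process generated by $\bar u$ and $(\bar Y,\bar Z)$ is the adapted solution of the linear mean-field BSDE in \rf{FBSDE}.

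For this I would compute $\lan\cM\bar u+\cK x,v\ran$ --- which is half the G\^ateaux derivative of $J(x;\cd)$ at $\bar u$ in the direction $v$ --- in two ways. Writing $X^v=\cL_1v_1+\cL_2v_2$ for the solution of \rf{state} with zero initial state and control $v$ (so that the state generated by $\bar u+\l v$ is $\bar X+\l X^v$), a direct expansion of $J(x;\bar u+\l v)$ from \rf{cost}, together with the symmetry relations $Q=Q^\top$, $\bar Q=\bar Q^\top$, $R_{ji}=R_{ij}^\top$, $\bar R_{ji}=\bar R_{ij}^\top$ and the elementary identity $\dbE\lan\z,\dbE[\eta]\ran=\lan\dbE[\z],\dbE[\eta]\ran$, shows that $\lan\cM\bar u+\cK x,v\ran$ equals $\dbE\lan G\bar X(T)+\bar G\dbE[\bar X(T)],X^v(T)\ran$ plus a running integral in which $X^v$ is paired against $Q\bar X+\bar Q\dbE[\bar X]+S^\top\bar u+\bar S^\top\dbE[\bar u]$ and $v$ against $S\bar X+\bar S\dbE[\bar X]+R\bar u+\bar R\dbE[\bar u]$. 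Since $\bar u$ is fixed, the forward part of \rf{FBSDE} is uncoupled, so $(\bar X,\bar Y,\bar Z)$ is obtained by first solving the forward mean-field SDE and then the linear mean-field BSDE, both uniquely solvable under \ref{ass:A1}--\ref{ass:A2}; applying It\^o's formula to $s\mapsto\lan\bar Y(s),X^v(s)\ran$, taking expectations and using $X^v(0)=0$, $\bar Y(T)=G\bar X(T)+\bar G\dbE[\bar X(T)]$ together with the same duality identity, gives an expression for $\dbE\lan G\bar X(T)+\bar G\dbE[\bar X(T)],X^v(T)\ran$ in which the contributions of $A,\bar A,C,\bar C$ cancel by transposition, while the contribution of $Q\bar X+\bar Q\dbE[\bar X]+S^\top\bar u+\bar S^\top\dbE[\bar u]$ exactly cancels the corresponding running term above. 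What survives is
$$
\lan\cM\bar u+\cK x,v\ran=\dbE\int_0^T\lan\L(s),v(s)\ran ds,\q\forall v\in\cU_1\times\cU_2,
$$
where $\L$ denotes the left-hand side of \rf{opti-condition}. Since $\bar X,\bar Y\in L^2_\dbF(\Om;C([0,T];\dbR^n))$, $\bar Z\in L^2_\dbF(0,T;\dbR^n)$ and all coefficients are essentially bounded, $\L\in\cU_1\times\cU_2$; hence $\cM\bar u+\cK x$ and $\L$ coincide in $\cU_1\times\cU_2$, so $\cM\bar u+\cK x=0$ holds if and only if $\L=0$, $ds\times d\dbP$-a.e.\ on $[0,T]\times\Om$, which is exactly \rf{opti-condition}. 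Combined with the first paragraph this proves the lemma.

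The only delicate point is the bookkeeping in the It\^o's-formula step: one has to keep careful track of all the mean-field cross terms $\dbE[\,\cd\,]$ and use both the symmetry of the weighting matrices and the fact that $\dbE[\bar A(s)^\top\bar Y(s)]=\bar A(s)^\top\dbE[\bar Y(s)]$ (and similarly for $\bar B,\bar C,\bar D$, since these coefficients are deterministic), so that the $A,\bar A,C,\bar C$ terms vanish and the $(Q,\bar Q,S,\bar S)$ terms line up. This is routine --- as the statement of the lemma already indicates, the argument is entirely parallel to that of \cite[Theorem 4.1]{Sun-Yong2014}, only now carried out for the mean-field system --- so I would not reproduce the algebra in detail.
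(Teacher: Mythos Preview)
Your proposal is correct and matches the paper's approach: the paper does not actually write out a proof of this lemma, merely noting that it ``is standard and is similar to that of \cite[Theorem 4.1]{Sun-Yong2014}'', which is precisely the duality/It\^o's-formula computation you outline. The only minor comment is that you could equally well bypass \autoref{Thm:suffi-nece-condition} and compute the G\^ateaux derivative of $J(x;\cd)$ directly, but your route through the operator identity $\cM\bar u+\cK x=0$ is equivalent and perhaps cleaner given what the paper has already established.
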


\ms

\noindent
\emph{\textbf{Proof of \autoref{open-loop-sovability}.}}
We first prove that the control $u^*$ defined by \rf{open-closed-re} is the unique open-loop saddle point.
Denote
\begin{align}
Y^*&=P(X^*-\dbE[X^*])+\Pi\dbE[X^*],\nn\\
\label{denote-Y-star}
 Z^*&=P\big\{C(X^*-\dbE[X^*])+(C+\bar C)\dbE[X^*]+D(u^*-\dbE[u^*])+(D+\bar D)\dbE[u^*]\big\}.
\end{align}
Note that
\begin{align*}
&\dbE[Y^*]=\Pi\dbE[X^*],\q\dbE[u^*]=\bar\Th\dbE[X^*],\q\dbE[Z^*]=P\big\{(C+\bar C)\dbE[X^*]+(D+\bar D)\dbE[u^*]\big\}.
\end{align*}
By  It\^{o}'s formula, we have
\begin{align}
\nn& dY^*(t)=\dot{P}(X^*-\dbE[X^*])dt+P\big\{(A+B\Th)(X^*-\dbE[X^*])\big\}dt\\
&\qq+P\big\{(C+D\Th)(X^*-\dbE[X^*])+[(C+\bar C)+(D+\bar D)\bar\Th]\dbE[X^*]\big\}dW(t)\nn\\
&\qq+\dot{\Pi}\dbE[X^*]dt+\Pi[(A+\bar A)+(B+\bar B)\bar\Th]\dbE[X^*]dt\nn\\
%
%
%
%
%
&\q=-\big[A^\top P+C^\top PC+Q+(C^\top PD+S^\top)\Th\big](X^*-\dbE[X^*])dt-\big\{(A+\bar A)^\top\Pi+Q+\bar Q\nn\\
&\qq\q+(C+\bar C)^\top P(C+\bar C)+[(C+\bar C)^\top P(D+\bar D)+S^\top+\bar S^\top]\bar\Th\big\}\dbE[X^*]dt+Z^*dW(t)\nn\\
\nn&\q=-\Big\{ A^\top Y^*+\bar A^\top\dbE[ Y^*]+C^\top P\big\{ C(X^*-\dbE[X^*])+ (C+\bar C)\dbE[X^*]+D(u^*-\dbE[u^*])\\
&\qq\qq+(D+\bar D)\dbE[u^*]\big\}+\bar C^\top P\big\{ (C+\bar C)\dbE[X^*]+(D+\bar D)\dbE[u^*]\big\}+QX^*+\bar Q\dbE[X^*]\nn\\
&\qq\q+S^\top u^*+\bar S^\top\dbE[u^*]\Big\}dt+ Z^*dW(t)\nn\\
\nn&\q=-\big\{A^\top Y^*+ \bar A^\top\dbE[Y^*] +C^\top Z^*+ \bar C^\top\dbE[Z^*] +Q X^*+\bar Q\dbE[X^*]+S^\top u^*\\
\label{Y-star}&\qq\q+\bar S^\top\dbE[u^*]\big\}dt+Z^*dW(t).
\end{align}
Further, by the terminal values of $P$ and $\Pi$, we get
\begin{align}
\nn Y^*(T)&=P(T)\{X^*(T)-\dbE[X^*(T)]\}+\Pi(T)\dbE[X^*(T)]\\
\label{Y-star-T}
&=GX^*(T)+\bar G\dbE[X^*(T)].
\end{align}
Thus $(X^*,Y^*,Z^*)$ satisfies MF-FBSDE \rf{FBSDE} with the control $\bar u\equiv u^*$.
Moreover, note that
\begin{align}
\nn& B^\top Y^*+\bar B^\top\dbE[Y^*]+D^\top Z^*+\bar D^\top\dbE[ Z^*]+SX^*+\bar S\dbE[X^*]+Ru^*+\bar R\dbE[ u^*]\\
%
&\nn\q=B^\top\{P(X^*-\dbE[X^*])+\Pi\dbE[X^*]\}+\bar B^\top\Pi\dbE[X^*]+SX^*+\bar S\dbE[X^*]+\bar R\bar\Th\dbE[X^*]\\
&\nn\qq+R\big\{\Th(X^*-\dbE[X^*])+\bar\Th\dbE[X^*]\big\}+\bar D^\top P\big\{(C+\bar C)\dbE[X^*]+(D+\bar D)\bar\Th\dbE[X^*]\big\}\\
&\nn\qq+D^\top P\big\{C(X^*-\dbE[X^*])+(C+\bar C)\dbE[X^*]+D\Th(X^*-\dbE[X^*])+(D+\bar D)\bar\Th\dbE[X^*]\big\}\\
&\nn\q=\big\{B^\top P+S+D^\top PC+(D^\top PD+R)\Th\big\}(X^*-\dbE[X^*])+\big\{(B+\bar B)^\top \Pi\\
&\nn\qq+S+\bar S+(D+\bar D)^\top P(C+\bar C)+[(D+\bar D)^\top P(D+\bar D)+R+\bar R]\bar\Th\big\}\dbE[X^*]\\
&\label{u-star}\q=0.
\end{align}
Then by \autoref{lemma-optimality},
the control $u^*$ defined by \rf{open-closed-re} is an open-loop saddle point.
The uniqueness of open-loop saddle points follows from \autoref{suffi-condition}.

\ms
By integration by parts and \rf{u-star}, we get
\begin{align}
\nn&\dbE\lan GX^*(T),X^*(T)\ran+\lan\bar G\dbE[X^*(T)],\dbE[X^*(T)]\ran
=\dbE\lan Y^*(T),X^*(T)\ran \\
\nn&\q=\dbE\lan Y^*(0),X^*(0)\ran+\dbE\int_0^T \[\big\lan B^\top Y^*+\bar B^\top\dbE[Y^*]+D^\top Z^*+\bar D^\top\dbE[ Z^*]-SX^*\\
\nn&\qq-\bar S\dbE[X^*],u^*\big\ran-\lan QX^*,X^*\ran-\lan\bar Q\dbE [X^*],X^*\ran\]ds\\
\nn&\q=\dbE\lan Y^*(0),X^*(0)\ran-\dbE\int_0^T \[\lan R u^*,u^*\ran+\lan\bar R \dbE[u^*],\dbE[u^*]\ran
+2\lan SX^*,u^*\ran\\
\nn&\qq+2\lan\bar S\dbE[X^*],\dbE[u^*]\ran+\lan QX^*,X^*\ran+\lan\bar Q\dbE [X^*],\dbE[X^*]\ran\]ds.
\end{align}
Noting that $Y^*(0)=\Pi(0)x$,  we have the following representation of the value function:
\begin{align}
\nn V(x)&=J(x;u^*)=\dbE\lan GX^*(T),X^*(T)\ran+\lan\bar G\dbE[X^*(T)],\dbE[X^*(T)]\ran\\
\nn&\q+\dbE\int_0^T \[\lan R u^*,u^*\ran+\lan\bar R \dbE[u^*],\dbE[u^*]\ran
+2\lan SX^*,u^*\ran+2\lan\bar S\dbE[X^*],\dbE[u^*]\ran\\
\nn&\q+\lan QX^*,X^*\ran+\lan\bar Q\dbE [X^*],\dbE[X^*]\ran\]ds\\
&=\dbE\lan Y^*(0),X^*(0)\ran=\lan \Pi(0)x,x\ran,\q x\in\dbR^n.\nn\qq\qq\qq\qq\qq\qq\hfill\qed
\end{align}


\section{Open-loop solvability: a perturbation approach}\label{sec:perturbation}
In \autoref{open-loop-sovability}, it is shown  that under the uniform convexity-concavity condition \ref{ass:H3},
Problem (MF-SG) is uniquely open-loop solvable and the unique open-loop saddle point
admits the closed-loop representation \rf{open-closed-re}.
In this section, we shall establish a characterization for the open-loop solvability of Problem (MF-SG)
without  assumption \ref{ass:H3}.
Recall from \autoref{Thm:suffi-nece-condition} that the following convexity-concavity condition
is necessary for the open-loop solvability of Problem (MF-SG):
\bel{convex-condition1}
\begin{aligned}
\lan\cM_{11}u_1,u_1\ran &=J(0;u_1,0)\ges 0,\q \forall u_1\in\cU_1;\\
\lan\cM_{22}u_2,u_2\ran &=J(0;0,u_2)\les 0,\q \forall u_2\in\cU_2.
\end{aligned}
\ee
Thus in this section, we always assume that \rf{convex-condition1} holds.

\subsection{The perturbation approach}\label{subsec:perturbation}
For each $\e>0$, we introduce the following perturbed  functional:
\begin{align}\label{cost-e}
\nn &J_\e(x;u_1,u_2)\deq J(x;u_1,u_2)+\e\dbE\int_0^T|u_1(s)|^2ds-\e\dbE\int_0^T|u_2(s)|^2ds\\
&\nn\q= \dbE\big\{\lan GX(T),X(T)\ran +\lan\bar G\dbE[X(T)],\dbE[X(T)]\ran  \\
&\hp{=\dbE\big\{ ~~} +\int_0^T\llan\begin{pmatrix}Q & S_1^\top & S_2^\top\\
                                              S_1 & R_{11}+\e I_{m_1} & R_{12}\\
                                              S_2 & R_{21} & R_{22}-\e I_{m_2}\end{pmatrix}
                                 \begin{pmatrix}X \\ u_1 \\ u_2\end{pmatrix},
                                 \begin{pmatrix}X \\ u_1 \\ u_2\end{pmatrix}\rran ds \nn\\
&\hp{=\dbE\big\{ ~~} +\int_0^T\llan\begin{pmatrix}\bar Q & \bar S_1^\top &\bar S_2^\top\\
                                              \bar S_1 &\bar R_{11} &\bar R_{12}\\
                                              \bar S_2 &\bar R_{21} &\bar R_{22}\end{pmatrix}
                                 \begin{pmatrix}\dbE[X] \\ \dbE [u_1] \\ \dbE[u_2]\end{pmatrix},
                                 \begin{pmatrix}\dbE[X] \\ \dbE [u_1] \\ \dbE[u_2]\end{pmatrix}\rran ds \Bigg\}.
\end{align}
We denote the two-person zero-sum LQ stochastic differential game associated with \rf{state}--\rf{cost-e} by Problem (MF-SG)$_\e$
and the value function  by $V_\e$.
Notice that
\bel{uniform-convex-condition-e}
\begin{aligned}
J_\e(0;u_1,0)&=J(0;u_1,0)+\e\dbE\int_0^T|u_1(s)|^2ds \ges \e \|u_1\|^2,\q~~ \forall u_1\in\cU_1;\\
J_\e(0;0,u_2)&=J(0;0,u_2)-\e\dbE\int_0^T|u_2(s)|^2ds \les -\e \|u_2\|^2,\q \forall u_2\in\cU_2.
\end{aligned}
\ee
Then with
\bel{R-e} R_\e\deq \begin{pmatrix} R_{11}+\e I_{m_1} & R_{12}\\
 R_{21} & R_{22}-\e I_{m_2}\end{pmatrix},
\ee
it follows from \autoref{Thm:solvability-RE1} and \autoref{Thm:solvability-RE2}  that the Riccati equations
\bel{Ric1-e}\left\{\begin{aligned}
  & \dot P_\e+P_\e A+A^\top P_\e+C^\top P_\e C+Q \\
  & \hp{\dot P} -(P_\e B+C^\top P_\e D+S^\top)(R_\e+D^\top P_\e D)^{-1}(B^\top P_\e+D^\top P_\e C+ S)=0, \\
  & P_\e(T)=G
\end{aligned}\right.\ee
and
\bel{Ric2-e}\left\{\begin{aligned}
& \dot{\Pi}_\e+\Pi_\e(A+\bar A)+(A+\bar A)^\top\Pi_\e+Q+\bar Q+(C+\bar C)^\top P_\e(C+\bar C)\\
& \hp{\dot{\Pi}} -\big[\Pi_\e(B+\bar B)+(C+\bar C)^\top P_\e(D+\bar D)+(S+\bar S)^\top\big]\\
& \hp{\dot \Pi} \times\big[R_\e+\bar R+(D+\bar D)^\top P_\e(D+\bar D)\big]^{-1}\\
& \hp{\dot \Pi} \times\big[(B+\bar B)^\top\Pi_\e+(D+\bar D)^\top P_\e(C+\bar C)+(S+\bar S)\big]=0, \\
& \Pi_\e(T)=G+\bar G
\end{aligned}\right.\ee
admit unique solutions $P_\e\in C([0,T];\dbS^n)$ and $\Pi_\e\in C([0,T];\dbS^n)$  satisfying
\begin{align}
&(-1)^{i+1}[R_{ii}+(-1)^{i+1}\e I_{m_i}+D_i^\top P_\e D_i]\gg 0,\q i=1,2,\\
\label{RE1-e-convex}
&(-1)^{i+1}[R_{ii}+\bar R_{ii}+(-1)^{i+1}\e I_{m_i}+(D_i+\bar D_i)^\top P_\e (D_i+\bar D_i)]\gg 0,\q i=1,2.
\end{align}
Denote
\begin{align}
\label{def-Si-e}&\Sigma_\e= R_\e+D^\top P_\e D,\q\bar\Sigma_\e= R_\e+\bar R+(D+\bar D)^\top P_\e(D+\bar D),\\
\label{def-TH-e}&\Th_\e=-\Sigma_\e^{-1}\big(B^\top P_\e+D^\top P_\e C+S\big),\\
\label{def-bar-Th-e}
&\bar\Th_\e=-\bar\Sigma^{-1}_\e[(B+\bar B)^\top\Pi_\e+(D+\bar D)^\top P_\e(C+\bar C)+S+\bar S].
\end{align}
By \autoref{open-loop-sovability},
the unique open-loop saddle point $u_\e=(u_{1}^{\e\top},u_{2}^{\e\top})^\top$ of Problem (MF-SG)$_\e$ is given by:
\bel{open-closed-re-e}
 u_\e = \Th_\e\big\{X_\e-\dbE[X_\e]\big\}+\bar\Th_\e\dbE[X_\e],
\ee
with $X_\e$ solving the closed-loop system:
$$\left\{\begin{aligned}
dX_\e(s) &=\big\{(A+B\Th_\e)(X_\e-\dbE[X_\e])+[(A+\bar A)+(B+\bar B)\bar\Th_\e]\dbE[X_\e]\big\}ds \\
  &\hp{=\ }+\big\{(C+D\Th_\e)(X_\e-\dbE[X_\e])+[(C+\bar C)+(D+\bar D)\bar\Th_\e]\dbE[X_\e]\big\}dW(s),\\
   X_\e(0) &=x.
\end{aligned}\right.$$

\ss
For any  $\e>0$ and $x\in\dbR^n$, the value of Problem (MF-SG)$_\e$ at $x$ is given by
$$
V_\e(x)=J_\e(x;u_1^\e,u_2^\e),
$$
where $u_\e=(u_1^\e,u_2^\e)$ is defined by \rf{open-closed-re-e}.
If the value  $V(x)$ of Problem (MF-SG) exists for some $x\in\dbR^n$,
then by the same arguments as  in the proof of \cite[Proposition 3.5]{Sun2020} we have
$$
\lim_{\e\to 0}V_\e(x)=V(x).
$$
This means that the family $\{u_\e\}_{\e>0}$ defined by \rf{open-closed-re-e} is an {\it approximate sequence} of Problem (MF-SG).
But as pointed out in \cite{Sun2020}, the existence of a value function does not imply that Problem (MF-SG) has an open-loop saddle point.
Thus in terms of the family $\{u_\e\}_{\e>0}$,  we present the following characterization for the open-loop solvability of Problem (MF-SG),
which is the main result of this section.

\begin{theorem}\label{thm:perturbation-approach}
Let {\rm\ref{ass:A1}--\ref{ass:A2}} and \rf{convex-condition1} hold.
Let $x\in\dbR^n$ be any given initial state and $\{u_\e\}_{\e>0}$ be the sequence defined by \rf{open-closed-re-e}.
Then the following statements are equivalent:
\begin{enumerate}[\rm(a)]
\item {\rm Problem} {\rm(MF-SG)} has an open-loop saddle point at $x$;
\item the family $\{u_\e\}_{\e>0}$ is bounded in the Hilbert space $L^2_\dbF(0,T;\dbR^{m_1+m_2})\equiv\cU_1\times\cU_2$, i.e.,
      $$\sup_{\e>0}\,\dbE\int_0^T|u_\e(s)|^2ds <\i;$$
\item the family $\{u_\e\}_{\e>0}$ is strongly convergent  in $L^2_\dbF(0,T;\dbR^{m_1+m_2})$ as $\e\to0$.
\end{enumerate}
Whenever {\rm(a)}, {\rm(b)}, or {\rm(c)} is satisfied, the strong  limit of  $\{u_\e\}_{\e>0}$
is an open-loop saddle point of {\rm Problem (MF-SG)} for the initial state $x$.
\end{theorem}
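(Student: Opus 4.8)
The plan is to reduce everything to the abstract representation of the perturbed saddle points together with \autoref{Prop:operator}. Since Problem (MF-SG)$_\e$ satisfies the uniform convexity-concavity condition \rf{uniform-convex-condition-e}, \autoref{suffi-condition} (and \autoref{open-loop-sovability}) applied to it shows that the family \rf{open-closed-re-e} is nothing but $u_\e=-\cM_\e^{-1}\cK x$, where $\cM_\e$ is the operator attached to the functional $J_\e$ and $\cK$ is unaffected by the perturbation. Writing $\cJ\deq\begin{pmatrix}I&0\\0&-I\end{pmatrix}$ so that $\cM_\e=\cM+\e\cJ$, the optimality relation $\cM_\e u_\e+\cK x=0$ rewrites as the key identity $\cM u_\e+\cK x=-\e\cJ u_\e$. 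Moreover, \rf{convex-condition1} says precisely that $\cM_{11}\ges0$ and $-\cM_{22}\ges0$, so \autoref{Prop:operator} applies; besides the bound $\|\cM_\e^{-1}\cM\|\les1$ from \rf{bound}, I would extract from its proof the sharper \emph{quantitative} estimate $\|\cM_\e^{-1}\cM v\|^2\les\|v\|^2-\e^2\|\h\cM_\e^{-1}v\|^2$ for every $v$, which follows from \rf{A-A-star} and the observation $\h\cM_\e+\h\cM_\e^*-\e I=\begin{pmatrix}2\cM_{11}+\e I&0\\0&-2\cM_{22}+\e I\end{pmatrix}\ges\e I$. With these preparations the plan is to prove the cycle {\rm(a)}$\Rightarrow${\rm(b)}$\Rightarrow${\rm(c)}$\Rightarrow${\rm(a)}.

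The implication {\rm(a)}$\Rightarrow${\rm(b)} is immediate: if $u^*$ is an open-loop saddle point at $x$ then $\cM u^*+\cK x=0$ by \autoref{Thm:suffi-nece-condition}, hence $u_\e=\cM_\e^{-1}\cM u^*$ and \rf{bound} gives $\|u_\e\|\les\|u^*\|$ for all $\e>0$. The implication {\rm(c)}$\Rightarrow${\rm(a)} is also short: if $u_\e\to u^*$ strongly then $\{u_\e\}$ is bounded, so $\e\cJ u_\e\to0$, and letting $\e\to0$ in the key identity gives $\cM u^*+\cK x=0$; since \rf{convex-condition1} holds, \autoref{Thm:suffi-nece-condition} shows $u^*$ is an open-loop saddle point. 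This computation also proves the final assertion of the theorem: in each of the three cases one of the arguments produces the strong limit, and the computation just given identifies it as an open-loop saddle point.

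The heart of the matter is {\rm(b)}$\Rightarrow${\rm(c)}. First, boundedness of $\{u_\e\}$ yields a sequence $\e_k\downarrow0$ with $u_{\e_k}\rightharpoonup\bar u$ weakly; passing to the weak limit in $\cM u_{\e_k}+\cK x=-\e_k\cJ u_{\e_k}$ (whose right-hand side tends to $0$ in norm) gives $\cM\bar u+\cK x=0$, so by \rf{convex-condition1} and \autoref{Thm:suffi-nece-condition} the weak limit $\bar u$ is an open-loop saddle point. For any such $\bar u$ one has $\cK x=-\cM\bar u$, hence $u_\e=\cM_\e^{-1}\cM\bar u=\bar u-\e\h\cM_\e^{-1}\bar u$; applying the quantitative estimate with $v=\bar u$ gives $\|u_\e\|^2\les\|\bar u\|^2-\|u_\e-\bar u\|^2$, i.e.\ the ``obtuse-angle'' inequality $\|u_\e\|^2\les\lan u_\e,\bar u\ran$, valid for \emph{every} open-loop saddle point $\bar u$ and every $\e>0$. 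Along $\e_k$ this gives $\limsup_k\|u_{\e_k}\|^2\les\|\bar u\|^2$, which combined with weak lower semicontinuity of the norm forces $\|u_{\e_k}\|\to\|\bar u\|$ and hence $u_{\e_k}\to\bar u$ strongly. Finally, if two sequences tending to $0$ produce strong limits $\bar u$ and $\h u$, both are saddle points, and feeding each into the obtuse-angle inequality of the other yields $\|\bar u\|^2\les\lan\bar u,\h u\ran$ and $\|\h u\|^2\les\lan\h u,\bar u\ran$, whence $\|\bar u-\h u\|^2\les0$; so all strong subsequential limits coincide, and a routine contradiction argument upgrades this to strong convergence of the whole family $\{u_\e\}_{\e>0}$.

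The step I expect to be the main obstacle is {\rm(b)}$\Rightarrow${\rm(c)}, and within it two points: recognizing that the proof of \autoref{Prop:operator} in fact delivers the quantitative decay $\|\cM_\e^{-1}\cM v\|^2\les\|v\|^2-\e^2\|\h\cM_\e^{-1}v\|^2$ rather than merely the operator-norm bound \rf{bound}, and then using the resulting inequality $\|u_\e\|^2\les\lan u_\e,\bar u\ran$ to upgrade weak subsequential convergence into strong convergence of the entire family with a uniquely determined limit. This is exactly the place where, as flagged in the introduction, the non-monotonicity of $V_\e$ in $\e$ prevents one from arguing through the value functions as in the control case and forces the Hilbert-space/operator argument above.
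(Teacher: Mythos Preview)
Your proposal is correct and takes a genuinely different route from the paper. Both proofs share the easy implication {\rm(a)}$\Rightarrow${\rm(b)} via $u_\e=\cM_\e^{-1}\cM u^*$ and the bound \rf{bound}. The divergence is in how one shows that weak subsequential limits are saddle points and that this limit is unique. The paper argues at the level of the functional: it invokes Mazur's theorem to produce strongly convergent convex combinations of the $u_{\e_k}$, then uses the convexity of $u_1\mapsto J(x;u_1,u_2)$ and concavity of $u_2\mapsto J(x;u_1,u_2)$ together with the saddle-point inequalities for each $J_{\e_{k+j}}$ to push through the limit; for uniqueness it shows that the midpoint $\tfrac12(u^*+\h u)$ of two weak limits is again a saddle point and then applies \rf{bound} with $v^*$ equal to this midpoint. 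You instead stay entirely in the operator framework: the linear identity $\cM u_\e+\cK x=-\e\cJ u_\e$ lets you pass to the weak limit directly (no Mazur, no functional convexity), and your sharpening of \rf{A-A-star} to $\|\cM_\e^{-1}\cM v\|^2\les\|v\|^2-\e^2\|\h\cM_\e^{-1}v\|^2$ yields the obtuse-angle inequality $\|u_\e\|^2\les\lan u_\e,\bar u\ran$, which handles both norm convergence and uniqueness in one stroke. Your argument is leaner and more self-contained; the paper's argument has the mild advantage of using only the operator-norm bound \rf{bound} as a black box, without reopening the proof of \autoref{Prop:operator}.
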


\begin{remark}
Since there is no coupled system in the above perturbation approach and
all the equations  involved can be solved by iteration method,
it will be much more convenient for computational purposes.
\end{remark}

The perturbation approach in stochastic LQ control problems was initially introduced by Sun--Li--Yong \cite{Sun-Li-Yong2016},
and further sharpened by Wang--Sun--Yong \cite{Wang-Sun-Yong} for finding the so-called weak closed-loop optimal strategies.
However, compared with the control problems \cite{Sun-Li-Yong2016,Wang-Sun-Yong},
in the game problem there are some new  difficulties, especially in proving the boundedness of $\{u_\e\}_{\e>0}$.
Before proving \autoref{thm:perturbation-approach}, we  present the following tailormade example,
from which we can perceive some essential differences between the perturbation approaches of LQ game and control problems.

\begin{example}\label{exap-perturbation1}
For any $x\in\dbR$, consider the  one-dimensional state equation
\bel{state-example1}\left\{\begin{aligned}
   \dot{X}(s) &=su_1(s)+u_2(s),\q s\in[0,1],\\
     X(0)&= x,
\end{aligned}\right.\ee
and the quadratic functional
\bel{cost-example1}
J(x;u_1,u_2)=-|X(1)|^2+\int_0^1s^2|u_1(s)|^2ds.
\ee
In the example, we let $\cU_1=\cU_2$ be the space of $\dbR$-valued square-integrable
functions on $[0,1]$. Note that
\begin{align*}
J(1;0,u_2)&=-|X(1)|^2=-\Big|1+\int_0^1u_2(s)ds\Big|^2\les 0=J(1;0,-1),\qq\forall u_2\in\cU_2;\\
J(1;u_1,-1)&=-\Big|\int_0^1 su_1(s)ds\Big|^2+\int_0^1 |su_1(s)|^2ds\ges 0=J(1;0,-1),\q\forall u_1\in\cU_1.
\end{align*}
Thus the control pair $(0,-1)$ is an open-loop saddle point for  $x=1$
and the convexity-concavity condition \rf{convex-condition1} holds.

\ms

For any $\e>0$ and $(u_1,u_2)\in\cU_1\times\cU_2$, denote
\bel{J-e-example1}
J_\e(x;u_1,u_2)=J(x;u_1,u_2)-\e \int_0^1|u_2(s)|^2ds.
\ee
Then
\begin{align}
&J_\e (0;0,u_2)= J(0;0,u_2) -\e \int_0^1|u_2(s)|^2ds\les-\e \int_0^1|u_2(s)|^2ds,\q\forall u_2\in\cU_2,\\
&J_\e(0;u_1,0)=J(0;u_1,0)\ges 0,\q\forall u_1\in\cU_1.
\end{align}
Roughly speaking, the above implies that the convexity of the mapping $u_1\mapsto J_\e(x;u_1,u_2)$ equals that of $u_1\mapsto J(x;u_1,u_2)$
and the concavity of  $u_2\mapsto J_\e(x;u_1,u_2)$ is stronger than that of $u_2\mapsto J(x;u_1,u_2)$.
However, we will show that the LQ game problem associated with \rf{state-example1}--\rf{J-e-example1}
has no open-loop saddle point for $x=1$.

\ms

We shall prove the above claim by contradiction.
Suppose that the LQ game problem associated with \rf{state-example1}--\rf{J-e-example1} has an
open-loop saddle point $(u_1^*,u_2^*)$. Then we must have
\bel{u2-star-1}
\int_0^1u_2^*(s)ds=-1.
\ee
Otherwise, $u^*_1$ is optimal for the LQ control problem associated with the state equation
\bel{state-example1-2}
   \dot{X}(s) =su_1(s),\q s\in[0,T];\q
     X(0)= h\deq 1+\int_0^1u_2^*(s)ds\neq 0,
\ee
and the cost functional $J(u_1)=J_\e(1;u_1,u_2^*)$. The corresponding optimal state is denoted by $X^*$.
Then by \cite[Corollary 3.3]{Sun-Li-Yong2016},  $u_1^*$ satisfies the following stationary condition:
\bel{sta-con-ex1}
sY^*(s)+s^2u_1^*(s)=0,\q s\in[0,1],
\ee
where $Y^*$ is the solution to the following adjoint equation:
\bel{adjoint-equation}
\dot{Y}^*(s)=0,\q s\in[0,T];\q Y^*(1)=-X^*(1).
\ee
By solving \rf{adjoint-equation}, the stationary condition \rf{sta-con-ex1} can be rewritten as
\bel{sta-con-ex11}
-sX^*(1)+s^2u_1^*(s)=0,\q s\in[0,1].
\ee
Recalling that $u_1^*\in\cU_1$ is a square-integrable function, we get
\bel{X-star}
X^*(1)=0,
\ee
which implies that
\bel{u-1-star}
u^*_1(s)=0,\q s\in[0,1].
\ee
Thus,
\bel{X-1-star}
0=X^*(1)=h+\int_0^1 su_1^*(s)ds=h.
\ee
This contradicts \rf{state-example1-2} and thus \rf{u2-star-1} holds.
Further, by \rf{u2-star-1},  note that
\begin{align}
J_\e (1;u_1,u^*_2)&=-\Big|\int_0^1 su_1(s)ds\Big|^2+\int_0^1 |su_1(s)|^2ds-\e \int_0^1|u^*_2(s)|^2ds,\nn\\
\label{J-u1-u2-star}
J_\e (1;u^*_1,u^*_2)&=\inf_{u_1\in\cU_1[0,1]}J_\e (1;u_1,u^*_2),
\end{align}
then we have
\bel{u-1-star2}
u_1^*(s)=0,\q s\in[0,1].
\ee
It follows that $u^*_2$ is   optimal for the control problem associated with the state equation
\bel{state-example1-31}
   \dot{X}(s) =u_2(s),\q s\in[0,1];\q
     X(0)=1 ,
\ee
and the cost functional $J(u_2)=-J_\e(1;0,u_2)$. Then, $u_2^*$ satisfies the stationary condition:
\bel{sta-con-ex1-21}
Y^*(s)+\e u_2^*(s)=0,\q s\in[0,1],
\ee
with $Y^*$ solving the adjoint equation:
\bel{adjoint-equation-22}
\dot{Y}^*(s)=0,\q s\in[0,1];\q Y^*(1)=X^*(1)=0.
\ee
It follows that
\bel{u-2-star-0}
u_2^*(s)=-{Y^*(s)\over\e}=0,\q s\in[0,1],
\ee
which contradicts \rf{u2-star-1}. Therefore,  the claim is proved; that is the LQ game problem associated
with \rf{state-example1}--\rf{J-e-example1} has no open-loop saddle points for $x=1$.
\end{example}

In the LQ control problems \cite{Sun-Li-Yong2016,Wang-Sun-Yong},
the perturbed cost functional $J_\e(x;u)$  is defined by adding $\e\|u\|^2$ to the
original one $J(x;u)$, where $u$ is the control process.
By the monotonicity of the mapping $\e \mapsto V_\e(x)$,
\cite{Sun-Li-Yong2016} showed that $\|u_\e\|^2$ is bounded by $ \|u^*\|^2$,
provided the original problem has an optimal control $u^*$.

\ms
From \autoref{exap-perturbation1}, we see that the new game associated with
$J(x;u_1,u_2)-\e\|u_2\|^2$ possibly has no saddle point even if the original is open-loop solvable.
Thus in the perturbation approach of games,
adding both $\e\|u_1\|^2$ and $-\e\|u_2\|^2$ to the original functional turns out to be necessary,
due to which the value function $V_\e(x)$ is not monotone in $\e$.
Noticing this key point,
a seemingly feasible approach is to introduce two parameters $\e_1,\e_2>0$
and  consider the game problem with the functional:
$$
J_{\e_1,\e_2}(x;u_1,u_2)=J(x;u_1.u_2)+\e_1\|u_1\|^2-\e_2\|u_2\|^2.
$$
Since $J_{\e_1,\e_2}$ is monotone in each $\e_i$,
it seems that the convergence of  $\{u_{\e_1,\e_2}\}_{\e_1,\e_2>0}$
can be obtained by letting $\e_1\to 0$ and $\e_2\to 0$ separately.
But in fact, after letting $\e_1\to 0$, one cannot take the limit by letting $\e_2\to 0$,
because the game with the functional $J_{0,\e_2}$ is possibly unsolvable (see \autoref{exap-perturbation1}).
In conclusion, the perturbation approaches of  LQ games (i.e., \autoref{thm:perturbation-approach}) and controls
(i.e., \cite{Sun-Li-Yong2016,Wang-Sun-Yong})  are essentially different.

\subsection{Proof of \autoref{thm:perturbation-approach}}
With the preparations in Sections \ref{Sec:Pre}, \ref{Sec:Rep-Functional} and Subsection \ref{subsec:perturbation},
now we are ready to prove \autoref{thm:perturbation-approach} by a Hilbert space method,
in which Proposition \ref{Prop:operator} and the Mazur's theorem (see Yosida \cite[p.120, Theorem 2]{Yosida}) play  important roles.

\ms\noindent
\emph{\textbf{Proof of \autoref{thm:perturbation-approach}.}}
(i) We begin by proving the implication  (a) $\Rightarrow$ (b).
Let $v^*$ be an open-loop saddle point of Problem (MF-SG) for the initial state $x$.
Then by \autoref{Thm:suffi-nece-condition}, $v^*$ must satisfy
\bel{v-star}
\cM v^*+\cK x=0,
\ee
where the operators $\cM$ and $\cK$ are defined by \rf{def-M}.
On the other hand,  \autoref{suffi-condition} shows that for any $\e>0$,
the unique open-loop saddle point $u_\e$ of Problem (MF-SG)$_\e$ for $x$ can be also given by
\bel{main-suffi-nece-condition-e}
u_\e=-\cM^{-1}_\e\cK x,
\ee
where
\begin{align}
&\cM_\e\deq\cM+\begin{pmatrix}\e I_{m_1}& 0\\0 & -\e I_{m_2}\end{pmatrix}
= \begin{pmatrix}\cM_{11}+\e I_{m_1}& \cM_{12}\\\cM_{21} & \cM_{22} -\e I_{m_2}\end{pmatrix}.
\end{align}
Combining \rf{v-star} with \rf{main-suffi-nece-condition-e} yields that
\bel{main-suffi-nece-condition-e1}
u_\e=-\cM^{-1}_\e\cK x=\cM^{-1}_\e\cM v^*.
\ee
Then by \autoref{Prop:operator}, noting \rf{convex-condition1}, we have
\bel{main-suffi-nece-condition-e2}
\|u_\e\|^2=\|\cM^{-1}_\e\cM v^*\|^2\les \|\cM^{-1}_\e\cM\|^2\|v^*\|^2\les \|v^*\|^2.
\ee
Thus, $\{u_\e\}_{\e>0}$ is bounded in the Hilbert space $L^2_{\dbF}(0,T;\dbR^{m_1+m_2})$.

\ms
(ii) We show that (b) $\Rightarrow$ (a). For convenience, we let
\bel{main-suffi-nece-condition-e4}
\sup_{\e>0}\|u_\e\|^2\les \|\bar u\|^2, \q\hbox{for some}~ \bar u\in\cU_1\times \cU_2.
\ee
Since $\{u_\e\}_{\e>0}$ is bounded in the Hilbert space $L^2_{\dbF}(0,T;\dbR^{m_1+m_2})$,
it admits a weakly convergent subsequence.
We denote this subsequence  by $\{u_{\e_j}\}_{j\ges 1}=\{(u_1^{\e_j},u_2^{\e_j})\}_{j\ges 1}$ and its weak limit by $u^*=(u_1^*,u_2^*)$.
Then by Mazur's theorem there exist $\l_{kj}\in[0,1]$, $j=1,2,...,N_k$  such that
\bel{Mazur1}
\sum_{j=1}^{N_k}\l_{kj}=1,\q k=1,2,...,
\ee
and
\bel{Mazur1-2}
\Big\|\sum_{j=1}^{N_k}\l_{kj}u_{\e_{k+j}}-u^*\Big\|\to 0,\q \hbox{as} \q k\to\i.
\ee
By the convexity of the mapping $u_1\mapsto J(x;u_1,u_2)$,
we have
\begin{align}
 &J\big(x;\sum_{j=1}^{N_k}\l_{kj}u_1^{\e_{k+j}},\sum_{j=1}^{N_k}\l_{kj}u_2^{\e_{k+j}}\big)
\les \sum_{j=1}^{N_k}\l_{kj}J\big(x;u_1^{\e_{k+j}},\sum_{j=1}^{N_k}\l_{kj}u_2^{\e_{k+j}}\big)\nn\\
\nn&\q =\sum_{j=1}^{N_k}\l_{kj}J_{\e_{k+j}}\big(x;u_1^{\e_{k+j}},\sum_{j=1}^{N_k}\l_{kj}u_2^{\e_{k+j}}\big)
-\sum_{j=1}^{N_k}\l_{kj}\e_{k+j}\|u_1^{\e_{k+j}}\|^2\\
&\qq+\sum_{j=1}^{N_k}\l_{kj}\e_{k+j}\Big\|\sum_{j=1}^{N_k}\l_{kj}u_2^{\e_{k+j}}\Big\|^2\nn\\
\nn &\q\les\sum_{j=1}^{N_k}\l_{kj}J_{\e_{k+j}}\big(x;u_1^{\e_{k+j}},\sum_{j=1}^{N_k}\l_{kj}u_2^{\e_{k+j}}\big)
+\sum_{j=1}^{N_k}\l_{kj}\e_{k+j}\Big\|\sum_{j=1}^{N_k}\l_{kj}u_2^{\e_{k+j}}\Big\|^2\nn.
\end{align}
By \rf{main-suffi-nece-condition-e4}, we get
$$
\Big\|\sum_{j=1}^{N_k}\l_{kj}u_2^{\e_{k+j}}\Big\|^2\les \sum_{j=1}^{N_k}\l_{kj}\|u_2^{\e_{k+j}}\|^2\les \|\bar u\|^2,
$$
and thus
\begin{align}
\nn &J\big(x;\sum_{j=1}^{N_k}\l_{kj}u_1^{\e_{k+j}},\sum_{j=1}^{N_k}\l_{kj}u_2^{\e_{k+j}}\big)\\
&\q\les\sum_{j=1}^{N_k}\l_{kj}J_{\e_{k+j}}\big(x;u_1^{\e_{k+j}},\sum_{j=1}^{N_k}\l_{kj}u_2^{\e_{k+j}}\big)
+\sum_{j=1}^{N_k}\l_{kj}\e_{k+j}\|\bar u\|^2 \label{con-u1}.
\end{align}
Moreover, note that $(u_1^{\e_{k+j}},u_2^{\e_{k+j}})$ is an open-loop saddle of Problem (MF-SG)$_{\e_{k+j}}$.
Thus for any $u_1\in\cU_1$,
\bel{con-u2}
J_{\e_{k+j}}\big(x;u_1^{\e_{k+j}},\sum_{j=1}^{N_k}\l_{kj}u_2^{\e_{k+j}}\big)
\les J_{\e_{k+j}}\big(x;u_1^{\e_{k+j}},u_2^{\e_{k+j}}\big)
\les J_{\e_{k+j}}\big(x;u_1,u_2^{\e_{k+j}}\big).
\ee
Substituting the above into \rf{con-u1} and then by the concavity of the mapping $u_2\mapsto J(x;u_1,u_2)$,
we have
\begin{align}
 &J\big(x;\sum_{j=1}^{N_k}\l_{kj}u_1^{\e_{k+j}},\sum_{j=1}^{N_k}\l_{kj}u_2^{\e_{k+j}}\big)
 \les \sum_{j=1}^{N_k}\l_{kj}J_{\e_{k+j}}\big(x;u_{1},u_{2}^{\e_{k+j}}\big)+\sum_{j=1}^{N_k}\l_{kj}\e_{k+j}\|\bar u\|^2\nn\\
&\q \les\sum_{j=1}^{N_k}\l_{kj}J\big(x;u_{1},u_2^{\e_{k+j}}\big)+\sum_{j=1}^{N_k}\l_{kj}\e_{k+j}\|u_{1}\|^2
+\sum_{j=1}^{N_k}\l_{kj}\e_{k+j}\|\bar u\|^2\nn\\
\label{con-u3}
&\q\les J\big(x;u_{1},\sum_{j=1}^{N_k}\l_{kj}u_2^{\e_{k+j}}\big)+\sum_{j=1}^{N_k}\l_{kj}\e_{k+j}\|u_{1}\|^2
+\sum_{j=1}^{N_k}\l_{kj}\e_{k+j}\|\bar u\|^2.
\end{align}
Thus by \rf{Mazur1}--\rf{Mazur1-2} and the (strong) continuity of the mapping $(u_1,u_2)\mapsto J(x;u_1,u_2)$,
letting $k\to\i$ in \rf{con-u3} yields that
\bel{con-u4}
J(x;u_1^*,u_2^*)\les J(x;u_1,u_2^*),\q\forall u_1\in\cU_1.
\ee
By the same argument as the above, we also have
\bel{con-u5}
J(x;u_1^*,u_2^*)\ges J(x;u^*_1,u_2),\q\forall u_2\in\cU_2.
\ee
The result then concludes from \rf{con-u4}--\rf{con-u5}.

\ms

(iii) The implication (c) $\Rightarrow$ (b) is trivially true.
We next prove  (b) $\Rightarrow$ (c).
We first claim: {\it The family $\{u_\e\}_{\e>0}$ is weakly convergent  as $\e\to0$
and the weak limit is an open-loop saddle point of {\rm Problem (MF-SG)} for $x$.}

\ms

If the above claim holds, then the family $\{u_\e\}_{\e>0}$ converges weakly to an open-loop saddle point $u^*$ of
Problem {\rm(MF-SG)} as $\e\to0$. Thus by the  weakly lower semicontinuity of the mapping $u\mapsto\|u\|^2$,
we have
\bel{t-3-51} \dbE\int_0^T|u^*(s)|^2ds \les \liminf_{\e\to0} \dbE\int_0^T|u_\e(s)|^2ds.\ee
On the other hand,  by \rf{main-suffi-nece-condition-e2},
with $v^*$ replaced by $u^*$, we obtain
\bel{t-3-5}\dbE\int_0^T|u_\e(s)|^2ds \les \dbE\int_0^T|u^*(s)|^2ds, \q\forall\e>0.\ee
Combining \rf{t-3-51}  with \rf{t-3-5} yields that
 $$
\lim_{\e\to 0} \dbE\int_0^T|u_\e(s)|^2ds=\dbE\int_0^T|u^*(s)|^2ds.
 $$
Recall that $u^*$ is the weak limit of $\{u_\e\}_{\e>0}$. Then the above implies that
$$
\lim_{\e\to0}\dbE\int_0^T|u_\e(s)-u^*(s)|^2ds=0.
$$
It shows that $\{u_\e\}_{\e>0}$ converges strongly to $u^*$ as $\e\to0$.
Thus to show (b) $\Rightarrow$ (c), we only need to prove the claim.

\ms
Noting that $L_{\dbF}^2(0,T;\dbR^{m_1+m_2})$ is a Hilbert space, to verify the claim,
it suffices to show that every weakly convergent subsequence of $\{u_\e\}_{\e>0}$
has the same weak limit, which is an open-loop saddle point of Problem {\rm(MF-SG)} for $x$.
Let $u^*$ and $\h u$ be the weak limits of two different weakly convergent subsequences
$\{u_{\e^i_k}\}_{k=1}^\infty$ $(i=1,2)$ of $\{u_\e\}_{\e>0}$.
Then by the same argument as in the proof of (b) $\Rightarrow$ (a),
we can show that both $u^*$ and $\h u$ are open-loop saddle points.
By the convexity of the mapping $u_1\mapsto J(x;u_1,u_2)$,
we have
\begin{align}
J\lt(x;{u_1^*+\h u_1\over 2},{u_2^*+\h u_2\over 2}\rt)
&\les {1\over2}J\lt(x;u_1^*,{u_2^*+\h u_2\over 2}\rt)+{1\over2}J\lt(x;\h u_1,{u_2^*+\h u_2\over 2}\rt).
\end{align}
Noting that both $u^*$ and $\h u$ are open-loop saddle points,
and  the mapping $u_2\mapsto J(x;u_1,u_2)$ is concave, we have
\begin{align}
\nn&{1\over2}J\lt(x;u_1^*,{u_2^*+\h u_2\over 2}\rt)+{1\over2}J\lt(x;\h u_1,{u_2^*+\h u_2\over 2}\rt)\\
\nn&\q\les {1\over2}J\lt(x;u_1^*,u_2^*\rt)
+{1\over2}J\lt(x;\h u_1,\h u_2\rt)\\
\nn&\q\les {1\over2}J\lt(x;u_1,u_2^*\rt)
+{1\over2}J\lt(x;u_1,\h u_2\rt)\\
&\q\les J\lt(x;u_1,{u_2^*+\h u_2\over 2}\rt),\q\forall u_1\in\cU_1.
\end{align}
Thus,
\bel{u-star-hat1}
J\lt(x;{u_1^*+\h u_1\over 2},{u_2^*+\h u_2\over 2}\rt)
\les J\lt(x;u_1,{u_2^*+\h u_2\over 2}\rt),\q\forall u_1\in\cU_1.
\ee
Similarly, we can  prove
\bel{u-star-hat2}
J\lt(x;{u_1^*+\h u_1\over 2},{u_2^*+\h u_2\over 2}\rt)
\ges J\lt(x;{u_1^*+\h u_1\over 2},u_2\rt),\q\forall u_2\in\cU_2.
\ee
Combining \rf{u-star-hat1} with \rf{u-star-hat2}, we get that ${u^*+\h u\over 2}$
is also an open-loop saddle point of Problem (MF-SG) with respect to $x$.
Thus by \rf{main-suffi-nece-condition-e2},
with $v^*$ replaced  by ${u^*+\h u\over 2}$, we obtain
\bel{proof11} \dbE\int_0^T|u_{\e^i_k}(s)|^2ds \les \dbE\int_0^T\lt|{u^*(s)+\h u(s)\over 2}\rt|^2ds, \q i=1,2. \ee
By the weakly lower semicontinuity of the mapping $u\mapsto\|u\|^2$ again, we have
\begin{align*}
\dbE\int_0^T|u^*(s)|^2ds \les \liminf_{k\to\i}  \dbE\int_0^T|u_{\e^1_k}(s)|^2ds,\\
 \dbE\int_0^T|\h u(s)|^2ds \les \liminf_{k\to\i}  \dbE\int_0^T|u_{\e^2_k}(s)|^2ds.
 \end{align*}
Thus  by taking inferior limits on the both sides of \rf{proof11}, we get
\begin{align*}
\dbE\int_0^T|u^*(s)|^2ds \les \dbE\int_0^T\lt|{u^*(s)+\h u(s)\over 2}\rt|^2ds,\\
\dbE\int_0^T|\h u(s)|^2ds \les \dbE\int_0^T\lt|{u^*(s)+\h u(s)\over 2}\rt|^2ds,
\end{align*}
which implies that
%
%
%
$$ \dbE\int_0^T|u^*(s)-\h u(s)|^2ds\les 0. $$
The claim is established.$\hfill\qed$

\begin{remark}
The boundedness of $\|\cM_\e^{-1}\cM\|^2$ is sufficient for that of $\|u_\e\|^2$,
which implies that $\{u_\e\}_{\e>0}$ admits a weakly convergent  subsequence.
With the help of  Mazur's theorem, the (strong) convergence of $\{u_\e\}_{\e>0}$
follows from the explicit upper bound (which is exactly $1$) of $\|\cM_\e^{-1}\cM\|^2$.
\end{remark}

\begin{remark}
If Problem (MF-SG) reduces to a (mean-field) LQ optimal control problem,
then $\cM=\cM_1$ and \rf{main-suffi-nece-condition-e1} becomes
$$
u_\e=\cM_{1\e}^{-1}\cM_1 v^*=(\cM_1+\e I)^{-1}\cM_1 v^*.
$$
Note that $\cM_1$ is a positive operator, while $\cM$ is indefinite in general.
Then the  Hilbert space method  brings the following new viewpoint: The perturbation approaches of LQ controls and games
are the outcomes of the explicit norm estimates for perturbed positive  operators and indefinite operators, respectively.
\end{remark}

\section{Example}\label{Sec:example}
In this section, we present a simple example to illustrate the procedure for finding the open-loop saddle points
 by \autoref{open-loop-sovability} under the sufficient condition \rf{uniform-convex-condition};
and identifying the open-loop solvability by \autoref{thm:perturbation-approach} under the necessary condition \rf{convex-condition}.

\begin{example}\label{example22}
Consider the  one-dimensional state equation
\bel{2state-example1}\left\{\begin{aligned}
   dX(s) &=u_1(s)ds+u_2(s)dW(s),\q s\in[0,1],\\
     X(0)&= x,
\end{aligned}\right.\ee
and the quadratic functional
\bel{2cost-example1}
J(x;u_1,u_2)=\dbE\Big\{-|X(1)|^2+\int_0^1\(|u_1(s)|^2-|\dbE[u_2(s)]|^2\)ds\Big\}.
\ee
It is straightforward to see that
\bel{2-convex-concave}
J(0;u_1,0)\ges 0\q\hbox{and}\q  J(0;0,u_2)\les 0,\q \forall (u_1,u_2)\in\cU_1\times\cU_2.
\ee
Suppose that $(u_1^*,u_2^*)$ is an open-loop saddle point, then by \autoref{lemma-optimality},
$(u_1^*,u_2^*)$ must satisfy
$$
u_1^*=-Y,\q \dbE[u_2^*]=-Z,
$$
with
\bel{OS-E}\left\{\begin{aligned}
&dX(s)=-Y(s)ds-Z(s)dW(s),\\
&dY(s)=Z(s)dW(s),\\
&X(0)=x,\q Y(T)=-X(T).
\end{aligned}\right.
\ee
Note that the solution of the Riccati equation associated with \rf{OS-E} is $-{1\over s}$,
which is not integrable over $[0,1]$, thus the decoupling technique is  not applicable.
Moreover,  FBSDE \rf{OS-E} does not satisfy the so-called {\it monotone condition} in Hu--Peng \cite{Hu-Peng1995}.
Thus we also cannot determine the solvability  of \rf{OS-E} by \cite{Hu-Peng1995} directly.

\ms
In the following, let us apply \autoref{thm:perturbation-approach} to determine the open-loop solvability of the game.
For any $\e>0$, we denote
\bel{2cost-e}
J_\e(x;u_1,u_2)=J(x;u_1,u_2)+\e\dbE\int_0^1|u_1(s)|^2ds-\e\dbE\int_0^1|u_2(s)|^2ds.
\ee
By \rf{2-convex-concave}, we have
\bel{2-uniform-convex-concave}
\begin{aligned}
&J_\e(0;u_1,0)\ges \e\dbE\int_0^1|u_1(s)|^2ds,\q~~ \forall u_1\in\cU_1,\\
&J_\e(0;0,u_2)\les -\e\dbE\int_0^1|u_2(s)|^2ds,\q \forall u_2\in\cU_2.
\end{aligned}\ee
Then we can apply \autoref{open-loop-sovability} to find the unique open-loop saddle point
of Problem (MF-SG)$_\e$ with state equation \rf{2state-example1} and functional \rf{2cost-e}.
The corresponding Riccati equations \rf{Ric1-e}--\rf{Ric2-e} in the example read:
\bel{ex-Pe}\left\{\begin{aligned}
&\dot{P_\e}(s)-{P_\e(s)^2\over 1+\e}=0,\q s\in[0,1],\\
&\dot{\Pi_\e}(s)-{\Pi_\e(s)^2\over 1+\e}=0,\q s\in[0,1],\\
&P_\e(1)=-1,\q \Pi_\e(1)=-1.
\end{aligned}\right.
\ee
Solving \rf{ex-Pe} by separating variables, we get
\bel{Pe-Pie}
P_\e(s)=-{1+\e \over s+\e},\q \Pi_\e(s)=-{1+\e \over s+\e},\q s\in[0,1].
\ee
Define the corresponding feedback operators $\Th_\e$ \rf{def-TH-e} and $\bar\Th_\e$ \rf{def-bar-Th-e}  by
\bel{Th-e-bar=Th-e}
\Th_\e(s)=\begin{pmatrix} {1\over s+\e} \\ 0\end{pmatrix},\q
\bar\Th_\e(s)=\begin{pmatrix} {1 \over s+\e} \\ 0\end{pmatrix},\q s\in[0,1].
\ee
Then the unique open-loop saddle point of Problem (MF-SG)$_\e$ is given by
\bel{2-u-star-e}
u_\e=\begin{pmatrix} u_1^\e \\ u^\e_2\end{pmatrix}=\Th_\e\big\{X_\e-\dbE[X_\e]\big\}+\bar\Th_\e\dbE[X_\e]
= \begin{pmatrix} {X_\e\over s+\e} \\0\end{pmatrix},\q s\in[0,1],
\ee
with $X_\e$ being the unique solution to the following closed-loop system:
\bel{2state-e-example1}\left\{\begin{aligned}
   dX_\e(s) &={X_\e(s)\over s+\e}ds,\q s\in[0,1],\\
     X_\e(0)&= x.
\end{aligned}\right.\ee
By the variation of constants formula for ordinary differential equations, we get
\bel{X-e-solved}
X_\e(s)={s+\e\over \e}x,\q s\in[0,1].
\ee
Combining the above with \rf{2-u-star-e},
we obtain the following explicit representation of $u_\e$:
\bel{2-u-star-e1}
u_\e=\begin{pmatrix} u^\e_1 \\ u^\e_2\end{pmatrix}
= \begin{pmatrix} {x\over \e} \\0\end{pmatrix},\q s\in[0,1].
\ee
Moreover, note that
\bel{u-e-bound}
\sup_{\e>0}\dbE\int_0^1|u_\e(s)|^2=\i, \q \hbox{if}\q x\neq 0,
\ee
and
\bel{u-e-bound1}
\sup_{\e>0}\dbE\int_0^1|u_\e(s)|^2=0, \q \hbox{if}\q x= 0.
\ee
Thus according to \autoref{thm:perturbation-approach}, the problem
has no open-loop saddle point for $x\neq 0$ and has an open-loop saddle point $(0,0)$ for $x=0$.
\end{example}

\section{Appendix}

\emph{\textbf{Proof of \autoref{lemma-algebra}}}.
The proof is divided into four cases.

\ms
\textbf{Case 1. } If $K$ is invertible and $M$ is positive definite, then
\begin{align}
\nn &L^\top ML-L^\top M K(K^\top M K+\d I_m)^{-1}K^\top ML\\
&\label{lemma-algebra-1}\q\ges L^\top ML^\top-L^\top M K(K^\top M K)^{-1}K^\top ML=0.
\end{align}

\ss
\textbf{Case 2. } If $m=n$, then there exist a sequence of invertible matrices $\{K_\e\}_{\e>0}$
and a sequence of positive definite matrices $\{M_\e\}_{\e>0}$ such that
\bel{lemma-algebra-2}
\lim_{\e\to 0} K_\e=K \q\hbox{and}\q\lim_{\e\to 0} M_\e=M.
\ee
Then from the facts $M\ges 0$, $\d>0$ and the result obtained in Case 1, we have
\begin{align}
\nn &L^\top ML-L^\top M K(K^\top M K+\d I_m)^{-1}K^\top ML\\
&\label{lemma-algebra-3}\q=\lim_{\e\to 0}\big[ L^\top M_\e L-L^\top M_\e K_\e(K_\e^\top M_\e K_\e+\d I_m)^{-1}K_\e^\top M_\e L\big]\ges 0.
\end{align}

\ss
\textbf{Case 3. } If $n>m$, set $\h K=(K, \textbf{0})$ such that $\h K\in\dbR^{n\times n}$,
where $\textbf{0}$ is the zero matrix with an appropriate dimension.
Then by the results obtained in Case 2, we have
\begin{align}
\nn 0 &\les L^\top ML-L^\top M \h K(\h K^\top M \h K+\d I_n)^{-1}\h K^\top ML\\
&\nn=L^\top ML-\begin{pmatrix} L^\top M  K & \textbf{0}\end{pmatrix}\begin{pmatrix} K^\top M K+\d I_m & \textbf{0}\\
 \textbf{0} &\d I_{n-m}\end{pmatrix}^{-1}\begin{pmatrix} K^\top M L\\
 \textbf{0}\end{pmatrix}\\
 &\label{lemma-algebra-4}=L^\top ML-L^\top M K(K^\top M K+\d I_m)^{-1}K^\top ML.
\end{align}

\ss
\textbf{Case 4. } If $n<m$, set
$$\bar L=\begin{pmatrix} L & \textbf{0}\\
 \textbf{0} &\textbf{0}\end{pmatrix},~
 \bar M=\begin{pmatrix} M & \textbf{0}\\
 \textbf{0} & \textbf{0}\end{pmatrix},~
 \bar K=\begin{pmatrix} K \\
 \textbf{0} \end{pmatrix},
 $$
such that $\bar L,\bar K\in\dbR^{m\times m}$ and $\bar M\in\dbS_+^{ m}$.
By the results obtained in Case 2 again, we have
\begin{align}
\nn 0 &\les\bar L^\top\bar  M\bar L-\bar L^\top \bar M \bar K(\bar K^\top\bar M \bar K+\d I_m)^{-1}\bar K^\top\bar M\bar L\\
&\nn=\begin{pmatrix} L^\top M L& \textbf{0}\\
 \textbf{0} &\textbf{0}\end{pmatrix}
 -\begin{pmatrix} L^\top M K\\\textbf{0}\end{pmatrix}(K^\top M K+\d I_m)^{-1}\begin{pmatrix} K^\top M L& \textbf{0}\end{pmatrix}\\
 &\label{lemma-algebra-5}
 =\begin{pmatrix} L^\top ML-L^\top M K(K^\top M K+\d I_m)^{-1}K^\top ML& \textbf{0}\\
 \textbf{0} &\textbf{0}\end{pmatrix},
\end{align}
which implies that
$$
L^\top ML-L^\top M K(K^\top M K+\d I_m)^{-1}K^\top ML\ges 0.
$$

\section*{Acknowledgements} The authors would like to thank the associate editor and the anonymous referees for
their suggestive comments, which lead to this improved version of the paper.

\end{document}